% Kevin Hughes
% Started 10 Nov 2017
% Update: 3 Jun 2019
% Latest update: 26 Nov 2019

\documentclass[12pt,reqno]{amsart}

\setlength{\voffset}{-0.5 in}
\oddsidemargin 0.15 in
\textwidth 6.25 in
\evensidemargin 0.15 in
\textheight 9.0 in
\usepackage{amssymb}
\usepackage{amsmath,amsthm}
\usepackage{verbatim} % used to make comments
\usepackage{xcolor}
\DeclareSymbolFont{bbold}{U}{bbold}{m}{n}
\DeclareSymbolFontAlphabet{\mathbbold}{bbold}

\usepackage{hyperref} % hyperlinks cross-references
% \usepackage[colorlinks = true, citecolor = cyan]{hyperref}
%\usepackage[
%  bookmarks = true, 
%  pagebackref = true, 
%  pdfauthor = {K Hughes, T Wooley}, 
%  pdftitle = \text{KdV p=10 - Hughes--Wooley}, 
%  colorlinks = false, 
%  linkcolor = green, 
%   linkbordercolor = {1 0 1},
%   pdfborderstyle={/S/U/W 1}
% citecolor = green]{hyperref}
% 
\newcommand{\C}{\mathbb{C} }
\newcommand{\R}{\mathbb{R} }
\newcommand{\Z}{\mathbb{Z} }
\newcommand{\T}{\mathbb{T} }
\newcommand{\Q}{\mathbb{Q} }
\newcommand{\N}{\mathbb{N} }
\newcommand{\1}{\mathbbold{1} }

\newcommand{\inparentheses}[1]{\left( #1 \right)}

\newcommand{\eof}[1]{e \inparentheses{#1}}

\renewcommand{\vector}[1]{{\boldsymbol #1}}
\newcommand{\extend}{{E}}
\newcommand{\set}{{\mathcal{A}}}

\newtheorem{theorem}{Theorem}[section]
\newtheorem{lemma}[theorem]{Lemma}

\newtheorem{conjecture}[theorem]{Conjecture}

\theoremstyle{definition}

\theoremstyle{remark}

\def\dd{{\,{\rm d}}}

\theoremstyle{definition}

\theoremstyle{remark}

\numberwithin{equation}{section}

\begin{document}
\title[Discrete restriction for $(x,x^3)$]{Discrete restriction for $(x,x^3)$ and related 
topics}
\author[Kevin Hughes]{Kevin Hughes}
\address{School of Mathematics, University of Bristol, Fry Building, Woodland Road, 
Clifton, Bristol BS8 1UG, United Kingdom, and the Heilbronn Institute for Mathematical 
Research, Bristol, United Kingdom}
\email{khughes.math@gmail.com}
\author[Trevor D. Wooley]{Trevor D. Wooley}
\address{Department of Mathematics, Purdue University, 150 N. University Street, West 
Lafayette, IN 47907-2067, USA}
\email{twooley@purdue.edu}
\subjclass[2010]{42B05, 11L07, 42B37, 35Q53}
\keywords{Fourier series, discrete restriction estimates, KdV-like equations}
\dedicatory{Dedicated to the memory of Jean Bourgain}
\begin{abstract} Defining the truncated extension operator $E$ for a sequence $a(n)$ 
with $n \in \Z$ by putting
\[
\extend{a}(\alpha,\beta):=\sum_{|n|\le N}a(n) e(\alpha n^3 + \beta n),
\]
we obtain the conjectured tenth moment estimate
\[
\| \extend a \|_{L^{10}(\T^2)}\lesssim_\epsilon N^{\frac{1}{10}+\epsilon}
\|a\|_{\ell^2(\Z)}.
\]
We obtain related conclusions when the curve $(x,x^3)$ is replaced by $(\phi_1(x),
\phi_2(x))$ for suitably independent polynomials $\phi_1(x),\phi_2(x)$ having integer 
coefficients.
\end{abstract}
\maketitle
%\tableofcontents
% 
% NOTES to myself
% 
% * To make footnotes, use \footnote[number - optional]{text} *
%command to box things in amsmath \boxed{...}

% ---
\section{Introduction}
% ---

We begin by recalling the discrete restriction conjecture for the curve $(x,x^3)$. 
Define the truncated extension operator $E$ for a sequence $a(n)$ with $n \in \Z$ by 
putting
\[
\extend{a}(\alpha,\beta):=\sum_{|n|\le N}a(n) e(\alpha n^3 + \beta n)
\]
for $\alpha,\beta \in \R$. Here and elsewhere, we write $\eof{t}$ in place of 
$e^{2\pi i t}$. Since $\eof{\cdot}$ is $\Z$-periodic, we may regard $\alpha$ and $\beta$ 
as elements of $\T:=\R/\Z$ or of any interval $I$ in $\R$ of length 1 without any 
confusion. Based on the usual heuristics in the circle method it is natural to make the 
following conjecture.
 
\begin{conjecture}\label{conjecture1.1}
For each $p \in [1,\infty]$ there exists a constant $C_p>0$ such that, for all $N \in \N$ 
and all sequences $a \in \ell^2(\Z)$, one has the discrete restriction bounds
\begin{equation}\label{eq:KdV}
\| \extend{a} \|_{L^{p}(\T^2)}\leq C_p \bigl( 1+N^{\frac{1}{2}-\frac{4}{p}}\bigr) 
\|a\|_{\ell^2(\Z)}.
\end{equation}

\end{conjecture}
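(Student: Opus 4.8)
Conjecture~\ref{conjecture1.1} is open in general; the substantive case, and the one underlying the tenth-moment estimate of the abstract, is the endpoint $p=10$, and it is this that I would attack. On taking $a$ to be the indicator of $\{n\in\Z:|n|\le N\}$ the bound at $p=10$ is equivalent to the mean-value estimate
\[
\mathcal{N}_5(N)\lesssim_\epsilon N^{6+\epsilon},
\]
where $\mathcal{N}_s(N)$ counts the solutions of the simultaneous equations $n_1+\cdots+n_s=m_1+\cdots+m_s$ and $n_1^3+\cdots+n_s^3=m_1^3+\cdots+m_s^3$ in integers with $|n_i|,|m_i|\le N$; indeed $\|a\|_{\ell^2}^2\asymp N$ and $\|\extend{a}\|_{L^{10}(\T^2)}^{10}=\mathcal{N}_5(N)$ for this $a$, and once $p=10$ is secured the conjecture follows, sharply, for every $p\ge 10$ by interpolation against the trivial bound $\|\extend{a}\|_{L^\infty(\T^2)}\le\|a\|_{\ell^1(\Z)}\le(2N+1)^{1/2}\|a\|_{\ell^2(\Z)}$ (and $p\le 4$ is classical). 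The passage from this indicator to a general $a\in\ell^2(\Z)$ goes by the customary dyadic decomposition of $|a|$ into level sets, reducing matters to $\mathcal{N}_5(N)\lesssim_\epsilon N^{1+\epsilon}|S|^5$ for an \emph{arbitrary} finite $S\subseteq\Z\cap[-N,N]$. I would prove the mean-value estimate by the Hardy--Littlewood circle method, writing $\mathcal{N}_5(N)=\int_{\T^2}|\extend{a}(\alpha,\beta)|^{10}\dd\alpha\dd\beta$ and feeding in a lower moment as the arithmetic input.

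On the minor arcs in the variable $\alpha$ dual to the cubic, Weyl's inequality yields $|\extend{a}(\alpha,\beta)|\lesssim_\epsilon N^{3/4+\epsilon}$ uniformly in $\beta$ (the linear term $\beta n$ being killed by the Weyl differencing), whence the minor-arc part of $\mathcal{N}_5(N)$ is at most $N^{3+\epsilon}\int_{\T^2}|\extend{a}|^6=N^{3+\epsilon}\mathcal{N}_3(N)$, and this is acceptable provided one has the \emph{critical} sixth-moment bound $\mathcal{N}_3(N)\lesssim_\epsilon N^{3+\epsilon}$. That bound I would deduce from the differencing substitution $n_i=u_i+v_i$, $m_i=u_i-v_i$: the linear equation becomes $v_1+v_2+v_3=0$, and, on eliminating $v_3$ and using $v_1^3+v_2^3+v_3^3=3v_1v_2v_3$, the cubic equation becomes
\[
v_1u_1^2+v_2u_2^2+v_3u_3^2=-v_1v_2v_3,
\]
which is \emph{linear} in the squares $u_i^2$. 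For triples $(v_1,v_2,v_3)$ with no vanishing proper subsum, fixing $v_1,v_2$ and $u_3$ reduces the count to the number of representations of a fixed integer of size $O(N^3)$ by the binary form $v_1X^2+v_2Y^2$, which is $O_\epsilon(N^\epsilon)$ by the divisor bound when this form is anisotropic and by the corresponding estimate for the Pell equation when it is isotropic; summing over $v_1,v_2,u_3$ gives $O(N^{3+\epsilon})$, and the thin set of degenerate triples, together with the configurations in which the target vanishes, is dealt with by hand and contributes $O(N^3)$.

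On the major arcs the usual pruning produces the expected main term of size $\asymp N^6$, the singular series and singular integral attached to one linear and one cubic equation in ten variables being absolutely convergent; this completes the mean-value bound, hence the tenth-moment estimate for the indicator sequence. The step I expect to be the real obstacle is the reduction to a general $a\in\ell^2(\Z)$: Weyl's inequality is worthless for a set $S$ whose exponential sum fails to display cancellation on the minor arcs --- for example a set of perfect cubes --- so the required bound $\mathcal{N}_5(N)\lesssim_\epsilon N^{1+\epsilon}|S|^5$ for arbitrary $S$ cannot be read off by rerunning the argument above. It must instead be coaxed out by an induction on scales, or by exploiting whatever additive structure level sets of an $\ell^2$-normalised sequence are forced to possess; and here the fact that $(x,x^3)$ is not translation--dilation invariant deprives one of the device (standard for Vinogradov systems) that deduces the general-subset bound from the interval case. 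Modulo this, the same architecture --- differencing substitution, minor arcs via a Weyl-type bound and a critical lower moment, major-arc main term --- should also deliver the generalisation to $(\phi_1(x),\phi_2(x))$ announced in the abstract, the only changes being in the Weyl-type estimate and the local factors, which depend on the degrees of $\phi_1,\phi_2$ and their independence.
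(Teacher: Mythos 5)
First, the statement you are addressing is a conjecture that the paper itself does not prove: what is actually established (Theorem~\ref{theorem:KdV10}) is the $\epsilon$-loss version \eqref{eq:KdV:epsilon_loss} at $p=10$ together with the sharp bound \eqref{eq:KdV} for $p>10$; the sharp endpoint $p=10$ and the range $6<p<10$ remain open. You correctly flag this, so the real question is whether your sketch delivers the $p=10$ result with $\epsilon$-loss. It does not, and the gap is exactly the one you concede in your final paragraph. Your reduction via the layer-cake/Lorentz decomposition to the bound $\mathcal{N}_5\lesssim_\epsilon N^{1+\epsilon}A^5$ for an \emph{arbitrary} subset $\mathcal{A}\subset\Z\cap[-N,N]$ of cardinality $A$ is the same first step the paper takes (Lemma~\ref{lemma:basic_Lorentz}), but everything after it --- Weyl's inequality on minor arcs, the critical sixth moment via binary quadratic forms, the major-arc main term --- lives only in the interval case $\mathcal{A}=\Z\cap[-N,N]$ and, as you say yourself, cannot be rerun for a general subset. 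Since the subset bound is the entire content of the theorem, ``modulo this'' is modulo everything.

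The paper closes precisely this gap, with no circle-method input at all. After using orthogonality in the linear variable to restrict to solutions with $x_1-y_1+x_2-y_2+x_3-y_3=0=x_4-y_4+x_5-y_5$ at the cost of a factor $O(N)$, it foliates over the common value $l$ of the two cubic forms and bounds $\sum_{l}c_2(l)c_3(l)$, where $c_t(l)$ counts solutions of $\sum_{i\le t}(x_i^3-y_i^3)=l$, $\sum_{i\le t}(x_i-y_i)=0$ with all variables in $\mathcal{A}$. The key inputs are algebraic identities: substituting the linear relation into the cubic one gives $(x_1+x_2)(x_1-y_1)(x_2-y_1)=-l/3$, so the divisor bound yields $c_2(l)\lesssim_\epsilon N^\epsilon$ uniformly for $l\ne 0$; for $l=0$, the factorisation $x^3+y^3=(x+y)(x^2-xy+y^2)$ gives $c_2(0)\le 3A^2$, and $(x_1+x_2+x_3)^3-(x_1^3+x_2^3+x_3^3)=3(x_1+x_2)(x_2+x_3)(x_3+x_1)$ gives $c_3(0)\lesssim A^3\exp(\kappa\log N/\log\log N)$; finally $\sum_{l\ne 0}c_2(l)c_3(l)\lesssim_\epsilon N^\epsilon\sum_l c_3(l)\lesssim N^\epsilon A^5$. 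This argument is elementary and manifestly uniform over subsets, which is exactly what your Weyl-based architecture cannot be. A smaller but genuine error: even granting the $\epsilon$-loss bound at $p=10$, the sharp estimate for $p>10$ does not follow by interpolation against the trivial $L^\infty$ bound --- that would only remove the $\epsilon$ if the $p=10$ endpoint were itself sharp; the paper instead invokes the $\epsilon$-removal lemmas of Henriot and Hughes.
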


Bourgain initiated the study of this restriction estimate in 
\cite{Bourgain:discrete_restriction:KdV}, wherein he proved the bound 
$\|Ea\|_{L^6(\T^2)} \lesssim_\epsilon N^\epsilon \|a\|_{\ell^2(\Z)}$ (see \cite[equation 
(8.37) on page 227]{Bourgain:discrete_restriction:KdV}). In order to facilitate further 
discussion we introduce a cruder version of the conjecture \eqref{eq:KdV}, to the effect 
that for each $\epsilon>0$, there exists a constant $C_{p,\epsilon}$ having the property 
that, for all $N \in \N$ and all sequences $a \in \ell^2(\Z)$, one has
\begin{equation}\label{eq:KdV:epsilon_loss}
\| \extend a \|_{L^{p}(\T^2)}\leq C_{p,\epsilon} N^\epsilon 
\left( 1+N^{\frac{1}{2}-\frac{4}{p}} \right) \|a\|_{\ell^2(\Z)}.
\end{equation}
In colloquial terms the estimate \eqref{eq:KdV:epsilon_loss} is the estimate 
\eqref{eq:KdV} with an ``$\epsilon$-loss''. Bourgain's work establishes this weaker 
conjecture for $1\le p\le 6$. The problem of proving Conjecture \ref{conjecture1.1} lay 
dormant for some time until Hu and Li \cite{HuLi:Degree3} established 
\eqref{eq:KdV:epsilon_loss} for $p=14$. We remark that Hu and Li conjectured 
\eqref{eq:KdV} for $2 \leq p \leq 8$ and \eqref{eq:KdV:epsilon_loss} for all 
$8\leq p\leq \infty$. Our conjecture here is a more optimistic version of \cite[equation 
(1.2)]{HuLi:Degree3} motivated by the observation that the underlying singular series 
does not diverge as it does in the quadratic case. Recently, Lai and Ding \cite{LaiDing} 
proved \eqref{eq:KdV:epsilon_loss} for $p=12$ using the recent resolution of the main 
conjecture in the discrete restriction analogue of the cubic case of Vinogradov's mean 
value theorem. The latter was noted first in \cite{Wooley:Restr} as a consequence of the 
methods of \cite{EC:cubic}, and was subsequently obtained by decoupling technology in 
\cite{BDG} and via efficient congruencing in \cite{EC:nested}.\par

Decoupling estimates and efficient congruencing estimates are stronger than discrete 
restriction estimates and therefore more difficult to obtain. By comparison with decoupling 
for the parabola, C. Demeter (personal communication) has shown that the analogous 
decoupling estimate for the curve $(x,x^3)$ fails in the range of exponents $6<p<12$. 
Despite this, in Section~\ref{section:cubiclinear} we obtain \eqref{eq:KdV:epsilon_loss} 
for $p=10$. 

\begin{theorem}\label{theorem:KdV10}
The estimate \eqref{eq:KdV:epsilon_loss} is true for $p=10$, and \eqref{eq:KdV} is true 
for all $p>10$. 
\end{theorem}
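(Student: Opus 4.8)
The target tenth-moment bound
\[
\| \extend a \|_{L^{10}(\T^2)}\lesssim_\epsilon N^{1/10+\epsilon}\|a\|_{\ell^2(\Z)}
\]
is, after raising to the tenth power and dualising, equivalent to counting solutions of the system
\[
n_1^3+\cdots+n_5^3 = m_1^3+\cdots+m_5^3, \qquad n_1+\cdots+n_5 = m_1+\cdots+m_5,
\]
with all variables in $[-N,N]$. So the first step is to reduce Theorem~\ref{theorem:KdV10} to the arithmetic statement that the number $J_{5,3,1}(N)$ of such solutions satisfies $J_{5,3,1}(N)\lesssim_\epsilon N^{5+\epsilon}$; the interpolation against the trivial $L^\infty$ bound and the $p>10$ tail then come for free, and one also checks that $N^{5}$ is the conjectured (diagonal plus lower-order) main term, consistent with the remark that the singular series converges here.

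**The main engine.** I would not try to prove a genuine decoupling or efficient-congruencing inequality for $(x,x^3)$ — Demeter's observation that $\ell^2$ decoupling fails for $6<p<12$ shows this is hopeless. Instead the idea is to exploit that we have \emph{two} conserved quantities and peel off one of them. Fixing the value of the linear form $n_1+\cdots+n_5=m_1+\cdots+m_5=h$ and of a suitable intermediate sum, one relates $J_{5,3,1}(N)$ to mean values of the Vinogradov-type system for cubes in fewer variables. Concretely, the plan is to write $\extend a(\alpha,\beta)$ as an average over $\beta$ of products of exponential sums and invoke Bourgain's sixth-moment estimate $\|Ea\|_{L^6(\T^2)}\lesssim_\epsilon N^{\epsilon}\|a\|_{\ell^2}$ together with the now-available sharp discrete restriction for the cubic Vinogradov system (the $k=3$ case: $\sum_{|n|\le N} e(\alpha n^3+\beta n^2+\gamma n)$ has $L^6(\T^3)$ norm $\lesssim_\epsilon N^\epsilon\|a\|_{\ell^2}$, from \cite{Wooley:Restr}, \cite{BDG}, \cite{EC:nested}). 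The arithmetic translation is: bound $J_{5,3,1}(N)$ by combining a count for the three-variable cubic system (which has the extra quadratic constraint for free once one sums over a shifted square) against an $\ell^2$ count for the remaining two pairs; Hölder's inequality in the right exponents should close the gap between $6$ and $10$ precisely because the $k=3$ Vinogradov restriction is \emph{sharp with $\epsilon$-loss only}.

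**Carrying it out.** So the key steps, in order: (1) dualise to the counting problem $J_{5,3,1}(N)\lesssim_\epsilon N^{5+\epsilon}$ and record that Theorem~\ref{theorem:KdV10} follows from this by interpolation with the trivial bound; (2) apply the circle method / a direct Fourier expansion to express $J_{5,3,1}(N)$ as $\int_{\T^2}|\extend{1}(\alpha,\beta)|^{10}\dd\alpha\dd\beta$ and split the ten factors as $6+4$; (3) on the block of six, insert a harmless auxiliary variable to upgrade to the three-variable cubic Vinogradov restriction estimate and use its sharpness; (4) on the block of four, use Bourgain's $L^6$ bound (or a straightforward $\ell^2$ divisor-type estimate) to control the remaining sum; (5) combine via Hölder and sum the resulting geometric-type series in the peeled-off parameter, checking that the exponent of $N$ lands exactly at $5$.

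**Main obstacle.** The delicate point is step~(3)–(4): matching exponents so that the losses from Hölder, from the auxiliary-variable insertion, and from summing over the conserved linear form all cancel to leave exactly $N^{5+\epsilon}$ rather than $N^{5+c}$ for some genuine $c>0$. This is where the precise shape of the cubic Vinogradov restriction bound (no power loss, only $N^\epsilon$) is essential, and where a cruder input — e.g. the classical Hua-type bound — would fail to reach $p=10$. A secondary technical nuisance is handling the main-term contribution (the non-diagonal solutions counted by the singular series) and verifying it is $\lesssim N^5$ rather than larger; this is where the convergence of the singular series, emphasised in the discussion after Conjecture~\ref{conjecture1.1}, does the work, in contrast to the genuinely divergent quadratic analogue.
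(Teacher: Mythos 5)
Your proposal has two structural problems that would each sink the argument, plus a quantitative miscalculation.

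First, the reduction in step (1) is not valid. A discrete restriction estimate must hold for \emph{every} $a\in\ell^2(\Z)$, and the hard case is precisely when $a$ is supported on a sparse set: for $a=\1_{\mathcal A}$ with $|\mathcal A|=A\ll N$ the required bound is $\|E\1_{\mathcal A}\|_{10}^{10}\lesssim_\epsilon N^{1+\epsilon}A^5$, which does not follow from any count with $a\equiv 1$. The paper's route is to prove exactly this bound for arbitrary subsets $\mathcal A\subset\Z\cap[-N,N]$ (Theorem~\ref{theorem:main}) and then upgrade to general $\ell^2$ sequences via the Lorentz-space lemma of \cite{GGPRY} (Lemma~\ref{lemma:basic_Lorentz}); there is no way around carrying the parameter $A$ through the argument. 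Relatedly, your target count $J_{5,3,1}(N)\lesssim_\epsilon N^{5+\epsilon}$ is false: the circle-method main term for the ten-variable system $\sum_{i\le 5} n_i^3=\sum_{i\le 5}m_i^3$, $\sum n_i=\sum m_i$ is of order $N^{10-3-1}=N^{6}$, which dominates the diagonal $N^5$; the correct and attainable statement is $N^{6+\epsilon}$, consistent with $N^{1+\epsilon}\|\1\|_{\ell^2}^{10}$.

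Second, the "main engine" of steps (3)--(5) cannot close. Inserting an auxiliary variable to access the sharp cubic Vinogradov restriction estimate and then applying H\"older is exactly the mechanism of Lai--Ding, and it naturally produces the exponent $p=12$; interpolating that with Bourgain's $p=6$ bound yields only $N^{2/15+\epsilon}$ at $p=10$, short of $N^{1/10+\epsilon}$. The idea actually needed is arithmetic rather than harmonic-analytic: after paying a factor of $O(N)$ to set the linear form to zero, one foliates over the common value $l$ of the cubic form, writes $\|Ea\|_{10}^{10}\le (8N+1)\sum_l c_2(l)c_3(l)$, and controls the fibre counts via the factorisation identities
\[
(x_1+x_2+x_3)^3-(x_1^3+x_2^3+x_3^3)=3(x_1+x_2)(x_2+x_3)(x_3+x_1),
\]
and its two-variable analogue, which convert $c_2(l)$ ($l\neq 0$) and $c_3(0)$ into divisor-counting problems bounded by $\tau_3$; the Vinogradov mean value theorem plays no role. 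Finally, your claim that the $p>10$ case "comes for free" by interpolation with $L^\infty$ only yields \eqref{eq:KdV:epsilon_loss} for $p>10$, still with an $\epsilon$-loss; obtaining the clean estimate \eqref{eq:KdV} requires the $\epsilon$-removal lemmas of \cite{HH:k-paraboloids}.
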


Our method of proof is motivated by corresponding techniques applied in the analogous 
number theoretic problem where $a(n)$ is identically $1$. In this situation (where 
$a(n)=1$ for all $n\in \Z$), the sixth moment estimate satisfies 
\[
\|a\|_{\ell^2(\Z)} \lesssim \|Ea\|_{L^6(\T^2)} \lesssim \|a\|_{\ell^2(\Z)} 
\]
and the ninth moment estimate satisfies 
\[
\|Ea\|_{L^9(\T^2)} \lesssim_\epsilon N^{\frac{1}{18}+\epsilon} \|a\|_{\ell^2(\Z)} 
\]
for all $\epsilon>0$; see \cite{Breteche,VW:nonary} and \cite{Wooley:odd} respectively. 
In Sections~\ref{section:2polys} and \ref{section:1poly} we extend our method to give 
new restriction estimates for related extension operators. Many of these estimates are not 
expected to be sharp.\par

In this paper we write $f(n) \lesssim g(n)$ to mean that there exists a constant $C>0$ 
with the property that $|f(n)|\leq Cg(n)$ for all $n$. This is equivalent to Vinogradov's 
notation $\ll$. Also, when $k\ge 2$, we write $\tau_k(n)$ for the $k$-fold 
divisor function defined via the relation
\[
\tau_k(n)=\sum_{\substack{d_1,\ldots ,d_k\in \N\\ d_1\ldots d_k=n}}1.
\] 

% --
\subsection*{Acknowledgements}
% --
The first author would like to acknowledge the Heilbronn Institute for Mathematical 
Research for its support. The second author's work was supported in part during the early 
phases of this research by a European Research Council Advanced Grant under the 
European Union's Horizon 2020 research and innovation programme via grant agreement 
No.~695223.

% ---
\section{The proof of Theorem~\ref{theorem:KdV10}}\label{section:cubiclinear}
% ---

It transpires that the full restriction estimate reported in Theorem~\ref{theorem:KdV10} 
is a consequence of the special case in which the sequence $a(n)$ is the characteristic 
function $\1_{\mathcal{A}}$ of a subset $\mathcal{A}$ of the truncated integers 
$\Z \cap [-N,N]$. We write $A$ for the cardinality of the set $\mathcal{A}$. Furthermore, 
in this context our extension operator is
\[
\extend\1_{\mathcal{A}}(\alpha,\beta) := \sum_{n \in \mathcal{A}} 
\eof{\alpha n^3 + \beta n}
\]
for $\alpha,\beta \in \T$. Our goal is the upper bound contained in the following theorem. 

\begin{theorem}\label{theorem:main}
There is a positive constant $\kappa$ such that, for each subset 
$\mathcal{A}\subset \Z \cap [-N,N]$ of cardinality $A$, one has 
\[
\int_{\T^2} \big| \extend\1_{\mathcal{A}}(\alpha,\beta) \big|^{10} \dd\alpha 
\dd\beta \lesssim N\exp \left( \kappa \frac{\log N}{\log\log N} \right) \cdot A^5.
\]
\end{theorem}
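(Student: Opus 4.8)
The plan is to bound the tenth moment by counting solutions to the underlying system of Diophantine equations. Expanding the integral, $\int_{\T^2} |E\1_{\mathcal{A}}|^{10}\dd\alpha\dd\beta$ equals the number of solutions of the pair of equations
\[
n_1+n_2+n_3+n_4+n_5 = n_6+n_7+n_8+n_9+n_{10}, \qquad n_1^3+\cdots+n_5^3 = n_6^3+\cdots+n_{10}^3,
\]
with all $n_i \in \mathcal{A}$. Denote this count by $J_5(\mathcal{A})$. The heuristic $A^{10}/(N \cdot N^3)$ would give $A^{10}/N^4$, which is too small; the truth sits at $A^5$ up to the $N\exp(\kappa\log N/\log\log N)$ factor, coming from the diagonal-type contributions where the multiset $\{n_1,\dots,n_5\}$ nearly matches $\{n_6,\dots,n_{10}\}$.

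First I would dyadically decompose $\mathcal{A}$ or, more efficiently, run the argument through a \emph{translation-dilation invariant} mean value. The key move is to parametrize solutions by the differences. Fixing $h_1 = n_1 - n_6$, $\dots$ and exploiting the two equations, one is led after elementary manipulation to an estimate of the shape
\[
J_5(\mathcal{A}) \lesssim \sum_{\vector{h}} (\text{number of ways to realize the differences}),
\]
where for each choice of the difference vector the linear and cubic equations pin down the variables up to a divisor-function-type ambiguity. Concretely, subtracting a common value, a difference $m^3 - k^3 = (m-k)(m^2+mk+k^2)$ factors, so once a difference $D$ of two cubes is fixed the number of representations $D = xy$ with $x = m-k$ is $O(\tau(D)) = O(\exp(C\log N/\log\log N))$; this is exactly where the peculiar factor $\exp(\kappa\log N/\log\log N)$ enters, via the maximal order of the divisor function $\tau_k(n)$ for $n \le N^{O(1)}$. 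I would arrange the counting so that after handling the two main variables via the cubic equation (producing one factor of $N$ from a free parameter and one divisor-function factor), the remaining equations become linear and contribute at most $A$ choices each, for a total of $A^5$.

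The cleanest route is probably to reduce to the already-known \emph{sixth moment / cubic Vinogradov} input. The sixth moment bound $\|E\1_{\mathcal{A}}\|_{L^6(\T^2)}^6 \lesssim N^\epsilon A^3$ (equivalently the $\epsilon$-loss main conjecture for the cubic system in three variables, cited in the excerpt from \cite{Wooley:Restr,BDG,EC:nested}) controls solutions of a three-variable analogue. I would split the ten variables as $5+5$, apply Cauchy--Schwarz or Hölder to peel off two variables, and estimate the resulting quantity by combining an $\ell^2$ count (trivially $A$ per variable over a linear constraint, giving one power of $A$) with the triple-variable mean value (giving $N^\epsilon A^3$ or similar), arriving at $N^{1+\epsilon}A^5$. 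The divisor-function wrinkle appears when one instead needs the \emph{pointwise} bound on the number of representations of an integer by $m^2+mk+k^2$, at which stage $N^\epsilon$ must be replaced by the sharper $\exp(\kappa\log N/\log\log N)$ — precisely the form stated in the theorem, and the reason the authors phrase the bound that way rather than with an $N^\epsilon$.

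The main obstacle will be organizing the combinatorial reduction so that exactly one factor of $N$ and exactly one divisor-type factor are lost, with everything else reducing to linear counting that yields $A^5$ rather than $A^6$ or $A^4$. In particular, one must avoid the trap where a naive Cauchy--Schwarz overcounts near-diagonal solutions and produces $N A^4$ times a larger power of $N$, or undercounts and produces $A^6$; the saving $A^5$ (half of $A^{10}$) is characteristic of the restriction exponent $p=10$ being the critical one here, and getting it requires using both the linear and the cubic equation with full efficiency — the linear equation to eliminate one variable for free, the cubic equation (after factoring a difference of cubes) to eliminate another at the cost of a single $N$ and a single $\tau$. I expect the proof to hinge on a clever grouping of the ten variables, followed by a Hölder-type interpolation against Bourgain's sixth moment estimate $\|E\1_{\mathcal{A}}\|_{L^6}^6 \lesssim N^\epsilon A^3$ and the trivial $L^\infty$ bound $\|E\1_{\mathcal{A}}\|_\infty \le A$.
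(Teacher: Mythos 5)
Your overall picture --- count solutions of the underlying Diophantine system, spend one factor of $N$ on the linear equation, and extract the $\exp(\kappa\log N/\log\log N)$ from a divisor bound after factoring the cubic --- matches the paper's strategy, but the proposal stops short of the two ideas that actually make the count close at $A^5$, and the concrete devices you do name would not suffice. First, the paper splits the ten variables into a block of four and a block of six and foliates over the common value $l$ of the cubic form, arriving at
$\|E\1_{\mathcal A}\|_{10}^{10}\le (8N+1)\sum_{|l|\le 4N^3}c_2(l)c_3(l)$,
where $c_t(l)$ counts $2t$-tuples from $\mathcal A$ with $\sum_i(x_i^3-y_i^3)=l$ and $\sum_i(x_i-y_i)=0$. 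The whole point is that for $l\ne 0$ the four-variable block satisfies $c_2(l)\lesssim\tau_3(|l/3|)$ with \emph{no} powers of $A$ at all, while $\sum_l c_3(l)$ is trivially $O(A^5)$ from the linear equation alone; that is where $A^5$ comes from. Second, the bound $c_2(l)\lesssim\tau_3$ does not follow from the two-factor identity $m^3-k^3=(m-k)(m^2+mk+k^2)$ that you propose: with that factorization there remain free variables in the block and you pick up extra powers of $A$. What is needed is to substitute the linear constraint $y_2=x_1+x_2-y_1$ into the cubic one, producing the three-factor identity $3(x_1+x_2)(x_1-y_1)(x_2-y_1)=-l$; the three divisors of $l/3$ then determine $x_1,x_2,y_1$ (hence $y_2$) completely. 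The same identity is what handles the diagonal term, giving $c_2(0)\le 3A^2$ and $c_3(0)\lesssim\exp(\kappa\log N/\log\log N)A^3$, a case your proposal does not treat separately even though it contributes at the critical order $A^5$.

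Your fallback suggestion --- H\"older interpolation of $\|E\1_{\mathcal A}\|_{10}$ between Bourgain's sixth moment and the trivial $L^\infty$ bound --- provably cannot give the theorem: it yields $\|E\1_{\mathcal A}\|_{10}^{10}\lesssim_\epsilon N^\epsilon A^7$, and $A^7$ exceeds $NA^5$ as soon as $A\gg N^{1/2}$ (for $A\asymp N$ it gives $N^7$ against the target $N^6$). Likewise, a Cauchy--Schwarz on a $5+5$ split overcounts the near-diagonal solutions, which is exactly the trap you flag but do not avoid. So while the divisor-function heuristic and the source of the single factor of $N$ are correctly identified, the proposal as written has a genuine gap: it lacks the $4+6$ foliation over $l$ and the three-linear-factor identity that together reduce everything to $\tau_3$ times a purely linear count.
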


\begin{proof} Fix the interval $[-N,N]$ and subset $\set \subset \Z \cap [-N,N]$, and let 
$a=\1_{\set}$. The tenth moment $\|Ea\|_{10}^{10}$ counts the 
number of solutions to the system of equations 
\begin{align*}
\sum_{i=1}^3 (x_i^3-y_i^3)&=\sum_{i=4}^5 (x_i^3-y_i^3)\\
\sum_{i=1}^3 (x_i-y_i)&=\sum_{i=4}^5 (x_i-y_i), 
\end{align*}
with each $\vector{x},\vector{y} \in \set^5$. We will foliate over the possible common 
values 
\begin{equation}\label{kw1}
x_1-y_1+x_2-y_2+x_3-y_3=h=x_4-y_4+x_5-y_5,
\end{equation}
as $h$ varies over $\Z$. Since the set $\set$ is contained in $[-N,N]$, we find that 
solutions are possible only when $h \in [-4N,4N]$. Fourier analytically, we may then write 
$\|\extend a\|_{10}^{10}$ as
\[
\sum_{|h| \leq 4N} \int_\T \int_\T |\extend a(\alpha_1,\alpha_2)|^4 e(-\alpha_2h) \dd
\alpha_2 \int_\T |\extend a(\alpha_1,\alpha_3)|^{6} e(-\alpha_3h) \dd\alpha_3 \dd 
\alpha_1. 
\]
Taking absolute values and applying the triangle inequality we deduce that 
\begin{equation}\label{kw20}
\| \extend a \|_{10}^{10} \leq (8N+1) \int_\T \int_\T \int_\T 
|\extend a(\alpha_1,\alpha_2)|^4 \; |\extend a(\alpha_1,\alpha_3)|^{6}\dd \alpha_1 
\dd \alpha_2\dd \alpha_3 .
\end{equation}
Note here that we have thrown away potential oscillation in order to impose the restriction 
that $h=0$ in (\ref{kw1}).\par

We next foliate over common values in the cubic equation. When $t\in \N$ and 
$l\in \Z$, write $c_t(l)$ for the number of solutions of the simultaneous equations
\[
\sum_{i=1}^t(x_i^3-y_i^3)=l\quad \text{and}\quad \sum_{i=1}^t(x_i-y_i)=0,
\]
with $\vector{x},\vector{y}\in \mathcal{A}^t$. Then, in a manner similar to that 
underlying our earlier discussion regarding the linear equation, it follows via orthogonality 
that
\begin{equation}\label{kw2}
\| \extend a\|^{10}_{10}\le (8N+1)\sum_{|l|\le 4N^3}c_2(l)c_3(l).
\end{equation}
Our argument now divides into two parts according to whether the summand $l$ is zero 
or non-zero.\par

In order to treat the contribution in (\ref{kw2}) from the summand with $l=0$, we begin 
by observing that $c_2(0)$ counts the number of solutions of the simultaneous equations
$$x_1^3+x_2^3=y_1^3+y_2^3\quad \text{and}\quad x_1+x_2=y_1+y_2,$$
with $\vector{x},\vector{y}\in \mathcal A^2$. The contribution arising from those 
solutions with $x_1+x_2=0=y_1+y_2$ is plainly at most $A^2$. When $x_1+x_2\ne 0$, 
meanwhile, one may divide the respective left and right hand sides of these equations to 
deduce that $x_1^2-x_1x_2+x_2^2=y_1^2-y_1y_2+y_2^2$, whence $x_1x_2=y_1y_2$. 
Thus $\{x_1,x_2\}=\{y_1,y_2\}$, and there are at most $2A^2$ solutions of this type. 
We thus have $c_2(0)\le 3A^2$. Moreover, it is a consequence of the discussion 
surrounding \cite[equation (8.37)]{Bourgain:discrete_restriction:KdV} that for a suitable 
positive number $\kappa$, one has 
\begin{equation}\label{kw3}
c_3(0)\lesssim \exp\left( \kappa \log N/\log \log N\right)\cdot A^3.
\end{equation}
Since the argument of the latter source is more complicated than would be available via 
earlier methods (see \cite[Lemma 5.2 of Chapter V]{Hua1965}), and further fails 
to address the case $b=a^3$ of \cite[equation (8.37)]{Bourgain:discrete_restriction:KdV}, 
we presently make a detour to justify the estimate (\ref{kw3}). For now, it suffices to 
combine our estimates for $c_2(0)$ and $c_3(0)$ to obtain the bound
\begin{equation}\label{kw4}
c_2(0)c_3(0)\lesssim \exp (\kappa \log N/\log \log N)\cdot A^5.
\end{equation}

We now give an alternate argument to give the claimed bound on $c_3(0)$. 
Observe that $c_3(0)$ counts the number of solutions of the simultaneous equations
\begin{equation}\label{kw5}
x_1^3+x_2^3+x_3^3=y_1^3+y_2^3+y_3^3\quad \text{and}\quad 
x_1+x_2+x_3=y_1+y_2+y_3,
\end{equation}
with $\vector{x},\vector{y}\in \mathcal A^3$. Since
\[
(x_1+x_2+x_3)^3-(x_1^3+x_2^3+x_3^3)=3(x_1+x_2)(x_2+x_3)(x_3+x_1),
\]
we see that
\begin{equation}\label{kw6}
(x_1+x_2)(x_2+x_3)(x_3+x_1)=(y_1+y_2)(y_2+y_3)(y_3+y_1).
\end{equation}
Thus, in particular, if $x_i+x_j=0$ for some distinct indices $i$ and $j$ in $\{1,2,3\}$, then 
$y_{i'}+y_{j'}=0$ for some distinct indices $i'$ and $j'$ in $\{1,2,3\}$, and one has also 
$x_k=y_{k'}$ for some indices $k$ and $k'$ in $\{1,2,3\}$. In this way we see that 
there are $O(A^3)$ choices for $\vector{x}$ and $\vector{y}$ satisfying (\ref{kw5}) for 
which the left hand side of (\ref{kw6}) is $0$. Given any fixed one of the $O(A^3)$ 
choices for $\vector{x}\in \mathcal A^3$ in which the left hand side of (\ref{kw6}) is 
equal to a non-zero integer $L$, meanwhile, each factor on the right hand side of 
(\ref{kw6}) is equal to a divisor of $L$. It consequently follows that there are at most 
$8\max_{1\le n\le 8N^3}\tau_3(n)$ choices for (positive or negative) integers 
$d_1,d_2,d_3$ with $d_1d_2d_3=L$ having the property that
\[
y_1+y_2=d_1,\quad y_2+y_3=d_2,\quad y_3+y_1=d_3.
\]
Writing $M$ for the fixed integer $x_1+x_2+x_3$, we see that for a fixed choice of 
$\vector{d}$, one has
\[
y_1=M-d_2,\quad y_2=M-d_3,\quad y_3=M-d_1,
\]
so that $\vector{y}$ is also fixed. Making use of standard estimates for $\tau_3(n)$, we 
may thus conclude that there is a positive number $\kappa$ for which
\[
c_3(0)\lesssim A^3+A^3\max_{1\le n\le 8N^3}\tau_3(n)\lesssim 
\exp (\kappa \log N/\log \log N)\cdot 
A^3,
\]
justifying our earlier assertion.
\par

We next turn to consider the contribution in (\ref{kw2}) of the non-zero summands $l$. 
When $l$ is a fixed integer with $1\le |l|\le 4N^3$, we see that $c_2(l)$ is equal to the 
number of solutions of the simultaneous equations
\[
x_1^3+x_2^3-y_1^3-y_2^3=l\quad \text{and}\quad y_2=x_1+x_2-y_1.
\]
Substituting from the latter of these equations into the former, we obtain the equation
\[
(x_1+x_2-y_1)^3-(x_1^3+x_2^3-y_1^3)=-l,
\]
whence
\[
(x_1+x_2)(x_1-y_1)(x_2-y_1)=-l/3.
\]
We therefore deduce that $3|l$ and, as in the previous paragraph, there are at most 
$8\tau_3(|l/3|)$ possible choices for integers $e_1,e_2,e_3$ with $e_1e_2e_3=-l/3$ and
\[
x_1+x_2=e_1,\quad x_1-y_1=e_2,\quad x_2-y_1=e_3.
\]
For any fixed such choice of $\vector{e}$, one sees that
\[
e_1-e_2-e_3=2y_1,\quad e_2-e_3-e_1=-2x_2,\quad e_3-e_1-e_2=-2x_1,
\]
so that $x_1,x_2,y_1$ are fixed. Since $y_2=x_1+x_2-y_1$, it follows that $y_2$ is also 
fixed. Thus we have
\[
\max_{1\le |l|\le 4N^3}c_2(l)\lesssim \max_{1\le |n|\le 2N^3}\tau_3(n)\lesssim 
\exp \left( \kappa \log N/\log \log N\right) .
\]

Making use of our estimate for $c_2(l)$, we find that
\[
\sum_{1\le |l|\le 4N^3}c_2(l)c_3(l)\lesssim \exp \left( \kappa \log N/\log \log N\right) 
\sum_{|l|\le 6N^3}c_3(l).
\]
The last sum counts the number of solutions of the equation
\[
x_1+x_2+x_3=y_1+y_2+y_3,
\]
with $\vector{x},\vector{y}\in \mathcal A^3$, which is plainly $O(A^5)$. Thus we infer 
that
\[
\sum_{1\le |l|\le 4N^3}c_2(l)c_3(l)\lesssim \exp 
\left( \kappa \log N/\log \log N\right) \cdot A^5.
\]
The conclusion of the theorem follows by substituting this estimate and (\ref{kw4}) into 
(\ref{kw2}). 
\end{proof}

\begin{proof}[Proof of Theorem~\ref{theorem:KdV10}]
We now deduce Theorem~\ref{theorem:KdV10} from Theorem~\ref{theorem:main}. 
The argument to do so is a standard `vertical layer cake decomposition' argument in the 
theory of Lorentz spaces. Although an elementary dyadic decomposition argument suffices 
for our purposes, for the sake of concision it is expedient to make reference to 
\cite[Lemma~3.1]{GGPRY}. Thus, we recall the special case $p=2$ of the latter for the 
reader's convenience.
 
\begin{lemma}\label{lemma:basic_Lorentz}
Let $T : \C^N \to [0,\infty)$ be a sublinear function such that $T({\bf1}_\set) \leq C\|
{\bf1}_\set\|_{\ell^2}$ for all subsets $\set \in \C^N$. Then for all $a \in \C^N$, one has
\[
T(a) \leq 2^{1/2}C(2+(\log{N})^{1/2}) \|a\|_{\ell^2(\Z)}.
\]
\end{lemma}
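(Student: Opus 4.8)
The plan is to prove Lemma~\ref{lemma:basic_Lorentz} by a \emph{layer-cake (distribution function) decomposition} of an arbitrary vector $a \in \C^N$ into level sets on which $a$ is essentially constant, apply the hypothesis on characteristic functions to each level set, and sum. First I would reduce to the case where $a$ has nonnegative real entries with $\|a\|_{\ell^2} = 1$ (indeed, replacing each entry by its absolute value does not decrease $\|T(a)\|$ when $T$ is sublinear and monotone in the obvious sense; strictly, one writes $a = \sum_{\pm} \pm(\mathrm{Re}\,a)^{\pm} + i\sum_\pm\pm(\mathrm{Im}\,a)^\pm$ and uses sublinearity four times, absorbing the constant). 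Then I would write, for each integer $k \ge 0$,
\[
\levelset{k} := \setof{ n \in \setof{1,\ldots,N} : 2^{-k-1} < |a(n)| \le 2^{-k} },
\]
so that the $\levelset{k}$ are disjoint and $\sum_{k \ge 0} \indicator{\levelset{k}} \cdot 2^{-k} \ge |a|$ pointwise, hence by sublinearity
\[
T(a) \le \sum_{k \ge 0} 2^{-k} T\inparentheses{\indicator{\levelset{k}}} \le C \sum_{k \ge 0} 2^{-k} \absolutevalueof{\levelset{k}}^{1/2},
\]
using the hypothesis $T(\indicator{\set}) \le C\|\indicator{\set}\|_{\ell^2} = C|\set|^{1/2}$.

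The second step is to bound $\sum_{k\ge 0} 2^{-k}|\levelset{k}|^{1/2}$ given the two constraints $\sum_k 2^{-2k}|\levelset{k}| \le 4\|a\|_{\ell^2}^2 = 4$ (from $|a(n)| > 2^{-k-1}$ on $\levelset{k}$) and $\sum_k |\levelset{k}| \le N$. I would split the sum at the threshold $k_0 := \lceil \tfrac12 \log_2 N \rceil$. For $k \le k_0$, Cauchy--Schwarz in $k$ gives
\[
\sum_{k \le k_0} 2^{-k}|\levelset{k}|^{1/2} = \sum_{k\le k_0} \absolutevalueof{\levelset{k}}^{1/2} \cdot 2^{-k} \le \inparentheses{\sum_{k \le k_0} \absolutevalueof{\levelset{k}}}^{1/2} \inparentheses{\sum_{k\le k_0} 4^{-k}}^{1/2} \lesssim N^{1/2},
\]
so this piece contributes at most a constant multiple of $\sqrt{N}$ — but to get the sharp constant I would instead Cauchy--Schwarz against the weight making the answer $(\log N)^{1/2}$, namely bound $\sum_{k\le k_0} 2^{-k}|\levelset{k}|^{1/2} \le (\sum_{k \le k_0} 2^{-2k}|\levelset{k}|)^{1/2}(k_0+1)^{1/2} \le 2(k_0+1)^{1/2}$ using the $\ell^2$ constraint. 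For the tail $k > k_0$, the geometric decay dominates: $|\levelset{k}| \le N \le 4^{k_0} \le 4^{k}$, so $2^{-k}|\levelset{k}|^{1/2} \le 2^{-k} \cdot 2^{k_0} \le 1$ is not summable directly, so here I would instead use $2^{-k}|\levelset{k}|^{1/2} \le 2^{-k} N^{1/2}$ and sum the geometric series from $k = k_0+1$, getting $\le N^{1/2} \cdot 2^{-k_0} \le N^{1/2} \cdot N^{-1/2} = 1$ times a constant. Combining, $T(a) \le C(2(k_0+1)^{1/2} + O(1))$, and tracking constants with $k_0 \le \tfrac12\log_2 N + 1$ yields $T(a) \le 2^{1/2}C(2 + (\log N)^{1/2})$ after converting $\log_2$ to $\log$; the precise bookkeeping of the numerical constant is routine arithmetic.

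The main obstacle is purely cosmetic: extracting the \emph{stated} constant $2^{1/2}C(2 + (\log N)^{1/2})$ rather than merely $O(C(1 + (\log N)^{1/2}))$. This forces one to be economical — one must avoid the reduction-to-nonnegative-reals step inflating the constant by a factor of $4$ (handled by noting $T$ can be applied directly to $\indicator{\levelset{k}}$ for the real and imaginary, positive and negative parts \emph{simultaneously} via a single $\ell^2$ estimate, or by working with a $\C$-valued layer-cake that loses only $\sqrt 2$), to choose the split point $k_0$ so that both the Cauchy--Schwarz factor $(k_0+1)^{1/2}$ and the geometric tail come out with clean constants, and to use $\log$ versus $\log_2$ carefully. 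Since the excerpt explicitly says ``an elementary dyadic decomposition argument suffices'' and cites \cite[Lemma~3.1]{GGPRY} for the general $p$ version, I would in the write-up simply invoke that reference for the sharp constant and present only the dyadic skeleton above, remarking that the $p = 2$ case is exactly what is needed.
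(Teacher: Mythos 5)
The paper does not actually prove this lemma: it quotes it as the $p=2$ case of \cite[Lemma~3.1]{GGPRY}, remarking only that ``an elementary dyadic decomposition argument suffices''. So your closing suggestion to defer to that reference is exactly what the authors do, and your dyadic skeleton has the right quantitative shape (Cauchy--Schwarz over the low dyadic scales against the constraint $\sum_k 2^{-2k}\absolutevalueof{\levelset{k}}\le 4\|a\|_{\ell^2}^2$, a geometric tail for the high scales, split at $k_0\approx\tfrac12\log_2 N$). However, as written the argument contains one genuine gap.

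The gap is the step
\[
\sum_{k\ge 0}2^{-k}\1_{\levelset{k}}\ge |a| \ \text{ pointwise}\quad\Longrightarrow\quad T(a)\le \sum_{k\ge 0}2^{-k}\,T\bigl(\1_{\levelset{k}}\bigr).
\]
Sublinearity lets you bound $T$ of an \emph{exact} decomposition, $T(\sum_k f_k)\le\sum_k T(f_k)$; it does not let you pass from a pointwise majorant of $|a|$ to a bound on $T(a)$, because $T$ is not assumed monotone. In the intended application $T(a)=\|\extend a\|_{L^{10}}$ is badly non-monotone in $|a(n)|$, so this step is not merely unjustified but false if taken literally. The level-set pieces $a\1_{\levelset{k}}$ are \emph{not} scalar multiples of $\1_{\levelset{k}}$ (the entries range over $(2^{-k-1},2^{-k}]$), so the decomposition $a=\sum_k a\1_{\levelset{k}}$ does not feed into the hypothesis on characteristic functions. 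You handled the analogous issue correctly in the reduction to nonnegative real entries (exact four-fold splitting rather than an appeal to monotonicity); the same care is needed here. The standard repair is an exact representation by indicator functions, e.g.\ the binary-digit decomposition: after normalising $\|a\|_{\ell^2}=1$ (so $\|a\|_{\ell^\infty}\le 1$), write $a=\sum_{j\ge1}2^{-j}\1_{B_j}$ where $B_j$ is the set of $n$ whose $j$-th binary digit is $1$. Then $T(a)\le C\sum_j 2^{-j}|B_j|^{1/2}$ is legitimate, and both of your constraints survive: $|B_j|\le N$, and $\sum_j 4^{-j}|B_j|\le\tfrac43\|a\|_{\ell^2}^2$ since $n\in B_j$ forces $a(n)\ge 2^{-j}$. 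Your Cauchy--Schwarz/geometric-tail computation then goes through essentially verbatim and yields $T(a)\lesssim C\bigl(1+(\log N)^{1/2}\bigr)\|a\|_{\ell^2}$, which is all the paper needs (any such loss is absorbed into $N^\epsilon$). Hitting the exact stated constant $2^{1/2}C(2+(\log N)^{1/2})$ requires more careful bookkeeping than you (or I) have done --- the four-fold splitting alone already costs a factor of $2$ --- and for that one should simply cite \cite{GGPRY}, as the authors themselves do.
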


We apply this lemma by taking $T(\cdot)$ to be $\|\extend{(\cdot)}\|_{10}$ with
\[
C=N \exp\left(\kappa \log{N}/\log\log{N}\right).
\]
Theorem~\ref{theorem:main} now implies that 
\[
\|\extend{a}\|_{10} 
\lesssim N \exp\left({\kappa\frac{\log{N}}{\log\log{N}}}\right) (1+(\log{N})^{1/2}) 
\|a\|_2.
\]
The conclusion of Theorem~\ref{theorem:KdV10} follows on noting that for all 
$\epsilon>0$, there exists a constant $C_\epsilon$ such that for all sufficiently large $N$, 
we have 
\[
\exp\left({\kappa\frac{\log{N}}{\log\log{N}}}\right) \left(1+(\log{N})^{1/2}\right) 
\leq C_\epsilon N^\epsilon.
\]

Our final task in the proof of Theorem~\ref{theorem:KdV10} is to prove \eqref{eq:KdV} 
for $p>10$. For this we use the ``$\epsilon$-removal lemmas'' \cite[Theorem~1.4 and 
Lemma~3.1]{HH:k-paraboloids}, which were adapted from 
\cite{Bourgain:discrete_restriction:NLS}. To be precise, in the statement of 
\cite[Lemma~3.1]{HH:k-paraboloids}, one takes $C=0$,  $p=10$, $q>10$ and 
$\zeta = 1/16$, and in the statement of \cite[Theorem~1.4]{HH:k-paraboloids}, one takes 
$d=1$ and $k=3$. 
\end{proof}

% ---
\section{Generalizations}\label{section:2polys}
% ---

We consider now the extension operator associated with two polynomials $\phi_1$ and 
$\phi_2$ with integral coefficients defined by
\[
\extend{a}(\alpha_1,\alpha_2):=\sum_{|n|\le N}a(n) 
e(\alpha_1\phi_1(n)+\alpha_2\phi_2(n))
\]
for $\alpha_1,\alpha_2 \in \R$. Since $e(\cdot)$ is $\Z$-periodic and the polynomials 
$\phi_1,\phi_2$ have integral coefficients, we may regard $\alpha_1$ and $\alpha_2$ as 
elements of $\T$ without any confusion. By making use of recent progress on decoupling 
and efficient congruencing, one may obtain the estimates contained in the following 
theorem. 

\begin{theorem}\label{theorem:1derivative}
Let $\phi_1,\phi_2$ be polynomials with integer coefficients and respective degrees $k_1$ 
and $k_2$ with $1 \leq k_1 \leq k_2$. If $\phi_1'$ and $\phi_2'$ are linearly independent 
over $\Q$, then we have
\begin{equation}\label{eq:1derivative:p=6}
\| \extend{a} \|_{L^{6}(\T^2)}
\lesssim_\epsilon
N^{\epsilon} \|a\|_{\ell^2(\Z)}
\end{equation}
and 
\begin{equation}\label{eq:1derivative:supercritical}
\| \extend{a} \|_{L^{{k_2(k_2+1)}}(\T^2)}
\lesssim_\epsilon
N^{\frac{1}{2}-\frac{k_1+k_2}{k_2(k_2+1)}+\epsilon} \|a\|_{\ell^2(\Z)}
\end{equation}
for each $\epsilon>0$ as $N \to \infty$.
\end{theorem}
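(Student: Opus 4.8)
The plan is to deduce both estimates from the case of characteristic functions. Since $a\mapsto\norm{L^{6}(\T^2)}{\extend a}$ and $a\mapsto\norm{L^{k_2(k_2+1)}(\T^2)}{\extend a}$ are sublinear, Lemma~\ref{lemma:basic_Lorentz} reduces matters to proving, for every set $\set\subseteq\Z\cap[-N,N]$ of cardinality $A$, that
\[
\norm{L^{6}(\T^2)}{\extend\1_{\set}}\lesssim_\epsilon N^\epsilon A^{1/2}\quad\text{and}\quad\norm{L^{k_2(k_2+1)}(\T^2)}{\extend\1_{\set}}\lesssim_\epsilon N^{\frac12-\frac{k_1+k_2}{k_2(k_2+1)}+\epsilon}A^{1/2},
\]
the $(\log N)^{1/2}$ factor furnished by the lemma being absorbed into $N^\epsilon$.

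For \eqref{eq:1derivative:p=6} I would argue by decoupling. The sixth moment $\norm{L^{6}(\T^2)}{\extend\1_{\set}}^{6}$ equals the number of $\vector{x},\vector{y}\in\set^3$ with $\sum_{i=1}^3\phi_1(x_i)=\sum_{i=1}^3\phi_1(y_i)$ and $\sum_{i=1}^3\phi_2(x_i)=\sum_{i=1}^3\phi_2(y_i)$, and by the usual periodisation of the continuous decoupling inequality at the $N^{-1}$ scale this is controlled by the $\ell^2L^6$-decoupling constant for the planar curve $t\mapsto(\phi_1(t),\phi_2(t))$. Because $\phi_1'$ and $\phi_2'$ are linearly independent over $\Q$, the affine curvature $\phi_1'\phi_2''-\phi_1''\phi_2'$ is a non-zero polynomial, so it vanishes at only $O_{k_1,k_2}(1)$ points of $[-N,N]$; on the boundedly many arcs away from neighbourhoods of these points the curve is non-degenerate and the Bourgain--Demeter $\ell^2L^6$-decoupling theorem for curves of non-vanishing curvature applies after an affine normalisation (cf. \cite{BDG}), while the arcs near a zero of the curvature are handled by the standard parabolic-rescaling and induction-on-scales device, the curvature vanishing there to finite order. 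Summing the contributions yields \eqref{eq:1derivative:p=6}.

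For \eqref{eq:1derivative:supercritical} I would use that $k_2(k_2+1)$ is exactly the critical exponent of Vinogradov's system of degree $k_2$. Write $\phi_i(n)=\sum_{j=0}^{k_2}b_j^{(i)}n^j$ with $b_j^{(i)}\in\Z$, $b_{k_1}^{(1)}\ne0$ and $b_{k_2}^{(2)}\ne0$, and let $\extend_0 a(\vector{\theta}):=\sum_{|n|\le N}a(n)e(\theta_1n+\cdots+\theta_{k_2}n^{k_2})$ be the moment-curve extension operator of degree $k_2$. With the substitution $\theta_j=\alpha_1b_j^{(1)}+\alpha_2b_j^{(2)}$ one has $|\extend\1_{\set}(\alpha_1,\alpha_2)|=|\extend_0\1_{\set}(\vector{\theta})|$. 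Expand $|\extend_0\1_{\set}(\vector{\theta})|^{k_2(k_2+1)}=\sum_{\vector{m}\in\Z^{k_2}}c(\vector{m})e(\vector{m}\cdot\vector{\theta})$, a finite sum whose coefficients satisfy $0\le c(\vector{m})\le c(\vector{0})$, with $c(\vector{0})=\int_{\T^{k_2}}|\extend_0\1_{\set}|^{k_2(k_2+1)}$, and $c(\vector{m})=0$ unless $|m_j|\lesssim N^j$ for each $1\le j\le k_2$. Substituting and integrating in $(\alpha_1,\alpha_2)\in[0,1]^2$, and using that $\int_0^1 e(\alpha t)\dd\alpha=0$ for every non-zero integer $t$ together with $\sum_{j}b_j^{(i)}m_j\in\Z$, we obtain
\[
\int_{\T^2}|\extend\1_{\set}(\alpha_1,\alpha_2)|^{k_2(k_2+1)}\dd\alpha_1\dd\alpha_2=\sum_{\vector{m}\in\Lambda_0}c(\vector{m})\le c(\vector{0})\cdot\#\bigl\{\vector{m}\in\Lambda_0:c(\vector{m})\ne0\bigr\},
\]
where $\Lambda_0=\{\vector{m}\in\Z^{k_2}:\sum_{j}b_j^{(1)}m_j=\sum_{j}b_j^{(2)}m_j=0\}$, the two defining forms being linearly independent over $\Q$ by hypothesis. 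By the resolution of the main conjecture in Vinogradov's mean value theorem for degree $k_2$, in its discrete restriction form (a consequence of the decoupling theorem \cite{BDG}; see also \cite{EC:nested}), $c(\vector{0})\lesssim_\epsilon N^\epsilon A^{k_2(k_2+1)/2}$. Assuming $k_1<k_2$, the relation $\sum_{j}b_j^{(1)}m_j=0$ involves only $m_1,\ldots,m_{k_1}$ (as $\deg\phi_1=k_1<k_2$) and has $b_{k_1}^{(1)}\ne0$, so it fixes $m_{k_1}$; the relation $\sum_{j}b_j^{(2)}m_j=0$ then fixes $m_{k_2}$; hence $\Lambda_0$ is parametrised by the remaining $k_2-2$ coordinates, each of size $\lesssim N^j$, and $\#\{\vector{m}\in\Lambda_0:c(\vector{m})\ne0\}\lesssim N^{(1+2+\cdots+k_2)-k_1-k_2}=N^{k_2(k_2+1)/2-k_1-k_2}$. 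Multiplying the two bounds and taking $k_2(k_2+1)$-th roots gives \eqref{eq:1derivative:supercritical}.

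I expect the real work to lie in two places. For \eqref{eq:1derivative:p=6} it is the decoupling input for the degenerate polynomial curve, i.e. the rescaling near the zeros of $\phi_1'\phi_2''-\phi_1''\phi_2'$: routine in spirit, but the only non-formal ingredient. For \eqref{eq:1derivative:supercritical} the lattice count above is transparent precisely when $k_1<k_2$, since then the leading terms of the two defining forms occupy different coordinates; when $k_1=k_2$ both forms involve the top coordinate, and one must instead work with a non-vanishing $2\times2$ minor of the coefficient matrix --- or first perform an integral linear change of variables in $(\alpha_1,\alpha_2)$ replacing $\phi_2$ by a polynomial of smaller degree --- and then check that the exponent so obtained still matches the one claimed.
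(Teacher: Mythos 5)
The paper itself does not prove this theorem: it obtains \eqref{eq:1derivative:p=6} by citing \cite[Corollary~1.3]{BD:Hypersurf} or the case $k=2$, $s=3$ of \cite[Theorem~1.1]{EC:nested}, and \eqref{eq:1derivative:supercritical} by citing \cite[Theorem~1.1]{LaiDing} together with the assertion that the argument there adapts. Your treatment of \eqref{eq:1derivative:supercritical} is in substance exactly the Lai--Ding argument: pass to the degree-$k_2$ Vinogradov extension operator, use positivity and the bound $c(\vector{m})\le c(\vector{0})$ for the Fourier coefficients of $|\extend_0\1_{\set}|^{k_2(k_2+1)}$, invoke the critical discrete restriction estimate for the moment curve to control $c(\vector{0})$, and count the lattice $\Lambda_0$; when $k_1<k_2$ this is correct and yields precisely the claimed exponent. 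For \eqref{eq:1derivative:p=6} your sketch identifies the right tool, but the step you call ``routine in spirit'' --- $\ell^2L^6$ decoupling on the arcs where the affine curvature $\phi_1'\phi_2''-\phi_1''\phi_2'$ degenerates --- is exactly the content of the results the paper cites and should not be waved through; a complete write-up should either carry out that induction on scales or simply quote \cite[Theorem~1.1]{EC:nested} with $k=2$, $s=3$, whose Wronskian hypothesis is your (correct) observation that linear independence of $\phi_1',\phi_2'$ over $\Q$ forces $\phi_1'\phi_2''-\phi_1''\phi_2'$ to be a non-zero polynomial. Note also that a genuine decoupling inequality gives \eqref{eq:1derivative:p=6} for arbitrary $a\in\ell^2$ directly, so the preliminary reduction to characteristic functions is superfluous for that estimate.

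Your hesitation about the case $k_1=k_2$ is well founded and, in fact, cannot be repaired: the estimate \eqref{eq:1derivative:supercritical} as stated fails when $k_1=k_2$. Take $k\ge 2$, $\phi_1(n)=n^{k}$, $\phi_2(n)=n^{k}+n^{k-1}$ (so that $\phi_1'$ and $\phi_2'$ are linearly independent over $\Q$), let $a=\1_{\Z\cap[1,N]}$, and put $p=k(k+1)$, $s=p/2$. Then $\|\extend a\|_{L^{p}(\T^2)}^{p}$ counts the solutions of $\sum_{i\le s}(x_i^{k}-y_i^{k})=0$ and $\sum_{i\le s}(x_i^{k-1}-y_i^{k-1})=0$ with all variables in $[1,N]$, which by Cauchy--Schwarz is at least of order $N^{2s}/(N^{k}\cdot N^{k-1})=N^{p-2k+1}$, whereas the $p$-th power of the right-hand side of \eqref{eq:1derivative:supercritical} is $O_\epsilon(N^{p-2k+p\epsilon})$. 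The hypothesis should therefore be $k_1<k_2$ (or, more precisely, $k_1$ should be taken to be the least degree occurring among the non-constant elements of the $\Q$-span of $\phi_1,\phi_2$); your proposed fix of replacing $\phi_2$ by $c_1\phi_2-c_2\phi_1$ produces exactly this weaker, correct exponent rather than the one claimed.
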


\noindent 
Regarding estimate~\eqref{eq:1derivative:p=6}, see \cite[Corollary~1.3]{BD:Hypersurf} 
or the case $k=2$ and $s=3$ of \cite[Theorem~1.1]{EC:nested}. Meanwhile, when 
$\phi_1$ and $\phi_2$ are two distinct monomials, the 
estimate~\eqref{eq:1derivative:supercritical} is a special case of 
\cite[Theorem~1.1]{LaiDing}. The reader will have no difficulty in verifying that one may 
adapt the arguments of \cite{LaiDing} in a straightforward manner to handle the situation 
in which $\phi_1'$ and $\phi_2'$ are linearly independent over $\Q$. 

A further consequence of the efficient congruencing/decoupling machinery is the following 
theorem.
 
\begin{theorem}\label{theorem:2derivatives}
Let $\phi_1,\phi_2$ be polynomials with integer coefficients and respective degrees $k_1$ 
and $k_2$ with $\min\{k_1,k_2\} > 1$. If $\phi_1''$ and $\phi_2''$ are linearly 
independent over $\Q$, then we have the estimate 
\begin{equation}\label{eq:2derivatives}
\| \extend{a} \|_{L^{12}(\T^2)}
\lesssim_\epsilon
N^{\frac{1}{12}+\epsilon} \|a\|_{\ell^2(\Z)}
\end{equation}
for each $\epsilon>0$ as $N \to \infty$.
\end{theorem}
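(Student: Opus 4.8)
The plan is to mimic the structure of the proof of Theorem~\ref{theorem:main}, foliating the twelfth moment over the common value of the \emph{linear combination of first derivatives} rather than over the linear form itself, and then exploiting the decoupling/efficient congruencing input \eqref{eq:1derivative:p=6} for the pair $(\phi_1',\phi_2')$ together with the hypothesis that $\phi_1''$ and $\phi_2''$ are linearly independent over $\Q$. More precisely, write $\psi_i=\phi_i'$; these have degrees $k_i-1\ge 1$, and since $\phi_i''=\psi_i'$ are linearly independent over $\Q$ the pair $(\psi_1,\psi_2)$ satisfies the hypothesis of Theorem~\ref{theorem:1derivative}, so in particular $\|\sum_{|n|\le N}b(n)e(\beta_1\psi_1(n)+\beta_2\psi_2(n))\|_{L^6(\T^2)}\lesssim_\epsilon N^\epsilon\|b\|_{\ell^2}$. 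The twelfth moment $\|\extend a\|_{12}^{12}$ counts solutions of the system $\sum_{i=1}^6(\phi_1(x_i)-\phi_1(y_i))=0$, $\sum_{i=1}^6(\phi_2(x_i)-\phi_2(y_i))=0$ with $\vector x,\vector y\in\set^6$, where again we may reduce to $a=\1_\set$ by Lemma~\ref{lemma:basic_Lorentz} (at the cost of a harmless $(\log N)^{1/2}$) and it suffices to prove $\|\extend\1_\set\|_{12}^{12}\lesssim N\exp(\kappa\log N/\log\log N)\cdot A^6$.

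First I would split the six pairs into two groups of three and foliate over the common value of a \emph{mixed} quantity. Using the polynomial identity that expresses $\phi_i(u+v+w)-\phi_i(u)-\phi_i(v)-\phi_i(w)$ in terms of divided differences, one extracts from the cubic-type case the feature that after the reduction $y_1+y_2+y_3=x_1+x_2+x_3+(\text{something})$ the problem localises; the honest way to organise this here is as follows. Introduce for $t\in\{2,3\}$ the counting function $c_t(\vector l)$ for the number of $\vector x,\vector y\in\set^t$ with $\sum_{i=1}^t(\phi_j(x_i)-\phi_j(y_i))=l_j$ for $j=1,2$; then orthogonality gives $\|\extend a\|_{12}^{12}=\sum_{\vector l}c_3(\vector l)^2$, and Cauchy--Schwarz would only give back the trivial bound, so instead I would bound the $p=6$ quantity for the \emph{derivative curve} against $A^3$ and transfer. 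Concretely, the cleanest route is: apply the $L^6$ estimate \eqref{eq:1derivative:p=6} for $(\psi_1,\psi_2)$ to get that the number of solutions of $\sum_{i=1}^3(\psi_1(x_i)-\psi_1(y_i))=\sum_{i=1}^3(\psi_2(x_i)-\psi_2(y_i))=0$ is $\lesssim_\epsilon N^\epsilon A^3$; then run the divisor-function argument of the proof of Theorem~\ref{theorem:main}, with the role of the factorisation $(x_1+x_2)(x_2+x_3)(x_3+x_1)$ played by the resultant-type factorisation coming from the two equations in $\phi_1,\phi_2$, to show that, on the complement of the ``degenerate'' solutions, the data $(\vector x,\vector y)$ is determined up to $\exp(\kappa\log N/\log\log N)$ choices by the common value of the linear-in-derivatives system; summing the square of this over the $O(N)$ possible common values, each counted with multiplicity $\lesssim_\epsilon N^\epsilon A^3$, yields $N\exp(\kappa\log N/\log\log N)\cdot A^6$. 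Finally, feeding this back through Lemma~\ref{lemma:basic_Lorentz} and the $\epsilon$-removal argument exactly as in the proof of Theorem~\ref{theorem:KdV10} converts the characteristic-function bound with its $\exp(\kappa\log N/\log\log N)(1+(\log N)^{1/2})$ factor into the clean $N^{1/12+\epsilon}$ bound of \eqref{eq:2derivatives}.

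The main obstacle I anticipate is making the ``resultant-type factorisation'' genuinely uniform in the polynomials $\phi_1,\phi_2$: for the model curve $(x,x^3)$ one had the clean identity $u^3+v^3+w^3-(u+v+w)^3=-3(u+v)(v+w)(w+u)$, and for general $\phi_i$ the analogous statement is that the two symmetric conditions $\sum\phi_j(x_i)=\sum\phi_j(y_i)$ ($j=1,2$), after eliminating one variable using the second equation, force a nontrivial polynomial in the remaining variables to vanish, and one must check that this polynomial factors into linear (or bounded-degree) forms whose values are divisors of a fixed integer of size $O(N^{\deg})$, so that $\tau_k$-estimates apply. This is where linear independence of the \emph{second} derivatives enters — it guarantees the eliminated system is genuinely two-dimensional and the resulting polynomial is not identically zero — but verifying the divisor-bounded structure for arbitrary $\phi_1,\phi_2$ rather than monomials will require care with the combinatorics of divided differences; I expect this to be routine but notationally heavy, and it is the only place where something could go wrong.
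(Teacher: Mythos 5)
Your proposal does not establish the theorem: the central step --- the ``resultant-type factorisation'' that is supposed to let a divisor-function argument determine $(\vector{x},\vector{y})$ up to $\exp(\kappa\log N/\log\log N)$ choices --- is precisely the point at which the method of Theorem~\ref{theorem:main} breaks down when \emph{both} polynomials are nonlinear, and you have not supplied a substitute. In the cubic--linear case the entire divisor argument rests on substituting the linear equation $x_1+x_2+x_3=y_1+y_2+y_3$ into the cubic one to produce the identity $(x_1+x_2)(x_2+x_3)(x_3+x_1)=(y_1+y_2)(y_2+y_3)(y_3+y_1)$, i.e.\ a genuine product of integer-valued linear forms equal to a fixed integer of size $O(N^3)$, to which $\tau_3$ applies. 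Under the hypotheses of Theorem~\ref{theorem:2derivatives} there is no linear equation in the system counted by $\|\extend{a}\|_{12}^{12}$, and eliminating a variable between two nonlinear symmetric equations does not in general yield a polynomial that splits into factors whose values divide a bounded integer. Likewise, the proposed ``transfer'' from the $L^6$ bound for the derivative curve $(\phi_1',\phi_2')$ to the counting problem for $(\phi_1,\phi_2)$ is never made precise and does not follow from orthogonality: the Diophantine system attached to $(\phi_1',\phi_2')$ is simply a different system, and no foliation of $\sum_i(\phi_j(x_i)-\phi_j(y_i))=0$ produces it. As you yourself flag, this is ``the only place where something could go wrong'' --- and it does; without it the only available bound is $\sum_{\vector{l}}c_3(\vector{l})^2\le \bigl(\max_{\vector{l}}c_3(\vector{l})\bigr)A^6\gtrsim A^9$, which is far too weak.

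The paper's actual proof is much shorter and avoids all of this. One introduces the auxiliary operator
\[
Fa(\alpha_1,\alpha_2,\alpha_3):=\sum_{|n|\le N}a(n)e(\alpha_1\phi_1(n)+\alpha_2\phi_2(n)+\alpha_3 n),
\]
and observes, by foliating the twelfth moment of $\extend{a}$ over the $O(N)$ possible values of $\sum_{i=1}^{6}(x_i-y_i)$ and discarding oscillation, that $\|\extend{a}\|_{L^{12}(\T^2)}\le (24N+1)^{1/12}\|Fa\|_{L^{12}(\T^3)}$; this is where the factor $N^{1/12}$ in \eqref{eq:2derivatives} comes from. The bound $\|Fa\|_{L^{12}(\T^3)}\lesssim_\epsilon N^\epsilon\|a\|_{\ell^2(\Z)}$ is then quoted directly from the case $k=3$, $s=6$ of \cite[Theorem~1.1]{EC:nested} for the three-dimensional curve $(\phi_1(t),\phi_2(t),t)$, the linear independence of $\phi_1''$ and $\phi_2''$ over $\Q$ serving exactly to verify the required Wronskian non-vanishing. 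If you want an argument in the spirit of Theorem~\ref{theorem:main}, the lesson is that the linear slot must be \emph{added} (at a controlled cost in powers of $N$) rather than extracted from the given system; with it in place the heavy lifting is done by nested efficient congruencing, not by a divisor estimate.
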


To derive this conclusion, one considers the auxiliary extension operator
\[
Fa(\alpha_1,\alpha_2,\alpha_3):=\sum_{|n|\le N}a(n)e(\alpha_1 
\phi_1(n)+\alpha_2\phi_2(n)+\alpha_3n),
\]
for $\alpha_1,\alpha_2,\alpha_3\in \R$. It is a consequence of the triangle inequality that
\[
\| \extend{a} \|_{L^{12}(\T^2)}
\leq (24N+1)^{\frac{1}{12}}\| Fa\|_{L^{12}(\T^3)}.
\]
Thus, the conclusion of Theorem \ref{theorem:2derivatives} follows from the estimate
\[
\| Fa \|_{L^{12}(\T^3)}\lesssim_\epsilon
N^{\epsilon} \|a\|_{\ell^2(\Z)}.
\]
This bound is immediate from the case $k=3$ and $s=6$ of 
\cite[Theorem 1.1]{EC:nested}, on checking that the Wronskian of first derivatives of the 
polynomials $\phi_1(t)$, $\phi_2(t)$ and $t$ is non-zero.\par

We expect the following sharp bound to hold in general.

\begin{conjecture}\label{conjecture:2polys}
Let $\phi_1,\phi_2$ be polynomials with integer coefficients having respective degrees 
$k_1$ and $k_2$ satisfying $\max\{k_1,k_2\} \geq 3$. If $\phi_1'$ and $\phi_2'$ are 
linearly independent over $\Q$, then for each $p \in [1,\infty]$, we have 
\[
\| \extend{a} \|_{L^{p}(\T^2)}
\lesssim 
\left( 1+N^{\frac{1}{2}-\frac{k_1+k_2}{p}} \right) \|a\|_{\ell^2(\Z)}
\]
as $N \to \infty$.
\end{conjecture}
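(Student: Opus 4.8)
The plan is to reduce Conjecture~\ref{conjecture:2polys} to a single Diophantine estimate at the critical exponent $p_c:=2(k_1+k_2)$, and then to attack that estimate by the foliation-and-divisor method of Theorem~\ref{theorem:main}. First I would pass, via the layer-cake argument of Lemma~\ref{lemma:basic_Lorentz}, from a general $a\in\ell^2(\Z)$ to a characteristic function $a=\1_{\mathcal{A}}$ of a set $\mathcal{A}\subset\Z\cap[-N,N]$ of cardinality $A$, at the cost of a power of $\log N$; the clean ($\epsilon$-free) conclusion would then be recovered in the supercritical range by the $\epsilon$-removal lemmas of \cite{HH:k-paraboloids} already used in the proof of Theorem~\ref{theorem:KdV10}, while in the subcritical range one interpolates with \eqref{eq:1derivative:p=6} and the trivial bounds $\|\extend\1_{\mathcal{A}}\|_{L^2(\T^2)}\lesssim A^{1/2}$ and $\|\extend\1_{\mathcal{A}}\|_{L^\infty(\T^2)}=A$. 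Granted this reduction, Riesz--Thorin interpolation (log-convexity of $p\mapsto\log\|\extend a\|_{L^p(\T^2)}$ in $1/p$) between \eqref{eq:1derivative:p=6} on one side and the trivial $L^\infty$ bound on the other shows that the whole conjecture follows, up to an $\epsilon$-loss, from the single critical even-moment estimate
\begin{equation*}
J_{k_1+k_2}(\mathcal{A}):=\#\Bigl\{\vector{x},\vector{y}\in\mathcal{A}^{k_1+k_2}:\ \sum_i\phi_1(x_i)=\sum_i\phi_1(y_i),\ \sum_i\phi_2(x_i)=\sum_i\phi_2(y_i)\Bigr\}\lesssim_\epsilon N^\epsilon A^{k_1+k_2}.
\end{equation*}

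To establish this estimate I would imitate the proof of Theorem~\ref{theorem:main}: split the $k_1+k_2$ pairs of variables into blocks and, on each block, use one of the two equations to eliminate a variable so that the surviving equation becomes a polynomial identity which, in favourable cases, factors as a product of linear forms in the remaining variables. For blocks on which the relevant sub-sums of $\phi_1$ and of $\phi_2$ both vanish, the count is a sharp subcritical (sextic-type) moment of the curve $(\phi_1,\phi_2)$, supplied at strength $N^{o(1)}$ times the appropriate power of $A$ by \eqref{eq:1derivative:p=6} and its higher-degree analogues from decoupling and efficient congruencing; for blocks carrying a non-zero sub-sum, a divisor-function bound of the shape $\max_{1\le|n|\le N^{O(1)}}\tau_r(n)\lesssim_\epsilon N^\epsilon$ controls the number of admissible factorisations, exactly as in the treatment of $c_2(l)$ and $c_3(0)$ for $(x,x^3)$. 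When $\min\{k_1,k_2\}>1$ one would first adjoin a dummy linear variable and pass to $\T^3$, in the spirit of the derivation of Theorem~\ref{theorem:2derivatives}, so that a linear equation is again available for the elimination step. For $\phi_1,\phi_2$ both cubic the programme already succeeds, since \eqref{eq:1derivative:supercritical} then furnishes exactly the critical $L^{12}$ bound; the cases $(x,x^3)$ and $(x^2,x^3)$ carry through this argument only partially.

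The hard part is that the factorisation step genuinely breaks down as degrees and block sizes grow: the identity $(x_1+\cdots+x_m)^k-(x_1^k+\cdots+x_m^k)$ is a product of linear forms only for small $k$ and $m$ --- already $(x+y)^4-x^4-y^4$ carries an irreducible quadratic factor --- so for polynomials of degree at least $4$ the divisor mechanism has nothing clean to bite on. Equivalently, one would have to avoid the step in Theorem~\ref{theorem:main} in which the linear equation \eqref{kw1} is discarded at the cost of a factor $8N+1$, that is, to exploit the coupling of the two equations rather than foliate over one of them. This is precisely the obstruction underlying the open range $6<p<2(k_1+k_2)$; indeed the critical estimate $J_4(\mathcal{A})\lesssim_\epsilon N^\epsilon A^4$ for the model curve $(x,x^3)$ --- which by the reduction above would already yield the $\epsilon$-loss form of Conjecture~\ref{conjecture1.1} --- is not known, decoupling provably failing in that range and efficient congruencing not applying to a system that is not translation--dilation invariant. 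Some genuinely new input, such as a two-dimensional mean-value estimate or a way of using the second equation beyond adjoining a dummy variable, appears to be needed to settle the conjecture in full.
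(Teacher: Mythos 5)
The statement you are asked to prove is Conjecture~\ref{conjecture:2polys}: the paper offers no proof of it and explicitly leaves it open, establishing only the partial results of Theorems~\ref{theorem:1derivative}, \ref{theorem:2derivatives} and \ref{theorem:2polys}. Your proposal is therefore correctly read as a reduction-plus-strategy rather than a proof, and to your credit you say so yourself. The reduction is sound in its $\epsilon$-loss form: Lemma~\ref{lemma:basic_Lorentz}, interpolation with \eqref{eq:1derivative:p=6} and the trivial $L^\infty$ bound, and the $\epsilon$-removal machinery of \cite{HH:k-paraboloids} do collapse the supercritical range onto the single critical moment estimate $J_{k_1+k_2}(\mathcal{A})\lesssim_\epsilon N^\epsilon A^{k_1+k_2}$. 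But that estimate is exactly the open problem, and your own diagnosis of why the paper's method cannot reach it is accurate: the foliation argument sacrifices one equation at a cost of $N$ (as in \eqref{kw20}), and the divisor-function mechanism requires the eliminated polynomial to factor into linear forms, which already fails for $(x+y)^4-x^4-y^4=2xy(2x^2+3xy+2y^2)$. Your observation that the case $k_1=k_2=3$ is accessible via \eqref{eq:1derivative:supercritical} (since then $k_2(k_2+1)=2(k_1+k_2)=12$) is a nice remark consistent with the paper's Theorem~\ref{theorem:1derivative}.

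Two further gaps beyond the one you name. First, the conjecture as stated is sharp, with no $N^\epsilon$: $\epsilon$-removal operates only strictly above the critical exponent, and the passage through Lemma~\ref{lemma:basic_Lorentz} costs a factor $(\log N)^{1/2}$, so even granting the critical moment estimate with $\epsilon$-loss you would not obtain the clean bound at $p=2(k_1+k_2)$ or in the range $6\le p\le 2(k_1+k_2)$; the paper's own remark that the $(k_1,k_2)=(1,2)$ analogue genuinely requires a $(\log N)^{1/6}$ at $p=6$ shows this is not a cosmetic issue. Second, the step in which you ``adjoin a dummy linear variable and pass to $\T^3$'' when $\min\{k_1,k_2\}>1$ reintroduces a factor of $N^{1/p}$ exactly as in the derivation of Theorem~\ref{theorem:2derivatives}, which is affordable at the exponent $12$ there but is not free at the critical exponent $2(k_1+k_2)$ for general degrees; you would need to justify that this loss can be absorbed, which is far from clear. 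In short: the proposal is a thoughtful and largely accurate account of the state of the art and of the obstruction, but it does not prove the conjecture, and no proof is currently known.
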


\noindent
Note that the analogue of Conjecture \ref{conjecture:2polys} corresponding to the case 
$(k_1,k_2)=(1,2)$ cannot hold in the sharp form stated here, for an additional factor 
at least as large as $(\log N)^{1/6}$ is required when $p=6$ (see the discussion around 
\cite[equation (2.51)]{Bourgain:discrete_restriction:NLS}, wherein Bourgain obtained 
nearly optimal bounds for all $p \in [1,\infty]$). We will prove the following new bound 
towards this conjecture. 

\begin{theorem}\label{theorem:2polys}
Let $\phi_1,\phi_2$ be polynomials with integer coefficients and respective degrees $k_1$ 
and $k_2$ with $1 \leq k_1 < k_2$ and $k_2 \geq 3$. Then one has the estimate
\begin{equation}\label{eq:p=10}
\| \extend{a} \|_{L^{10}(\T^2)}\lesssim_\epsilon N^{\frac{1}{10}+\epsilon} 
\|a\|_{\ell^2(\Z)}
\end{equation}
for each $\epsilon>0$ as $N \to \infty$.
\end{theorem}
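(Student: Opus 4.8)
The plan is to mirror the proof of Theorem~\ref{theorem:main}, replacing the monomials $n^3$ and $n$ by the polynomials $\phi_2(n)$ and $\phi_1(n)$, and then feeding the resulting characteristic-function estimate through Lemma~\ref{lemma:basic_Lorentz} and the $\epsilon$-removal machinery exactly as in the proof of Theorem~\ref{theorem:KdV10}. So the first step is to reduce \eqref{eq:p=10} to showing that for every $\set\subset\Z\cap[-N,N]$ of cardinality $A$ one has $\int_{\T^2}|\extend\1_\set(\alpha_1,\alpha_2)|^{10}\dd\alpha_1\dd\alpha_2\lesssim N\exp(\kappa\log N/\log\log N)\cdot A^5$; the Lorentz-space interpolation and $\epsilon$-removal steps are then verbatim copies of what was done for $(x,x^3)$, since $\phi_1,\phi_2$ have integer coefficients and $k_2\geq3$ supplies the supercritical exponent.

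For the characteristic-function estimate, I would follow the same two foliations. The tenth moment counts solutions in $\set^{5}$ of $\sum_{i=1}^3(\phi_2(x_i)-\phi_2(y_i))=\sum_{i=4}^5(\phi_2(x_i)-\phi_2(y_i))$ together with the analogous equation for $\phi_1$. First foliate over the common value $h$ of $\sum_{i=1}^3(\phi_1(x_i)-\phi_1(y_i))$, which ranges over $O(N^{k_1})$ integers since $\phi_1$ is a fixed polynomial; discarding oscillation as in \eqref{kw20} loses a factor $O(N^{k_1})$ and reduces matters to bounding $\sum_{|l|}c_2(l)c_3(l)$, where now $c_t(l)$ counts solutions of $\sum_{i=1}^t(\phi_2(x_i)-\phi_2(y_i))=l$, $\sum_{i=1}^t(\phi_1(x_i)-\phi_1(y_i))=0$ with $\vector{x},\vector{y}\in\set^t$, and $l$ ranges over $O(N^{k_2})$ values. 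The $l=0$ term needs $c_2(0)c_3(0)\lesssim\exp(\kappa\log N/\log\log N)A^5$; the nonzero terms need $\max_{l\neq0}c_2(l)\lesssim\exp(\kappa\log N/\log\log N)$ together with $\sum_l c_3(l)=O(A^5)$, the latter being immediate since it counts solutions of a single linear equation $\sum(\phi_1(x_i)-\phi_1(y_i))=0$ in six variables. Combining, one gets $\|\extend\1_\set\|_{10}^{10}\lesssim N^{k_1+k_2}\exp(\kappa\log N/\log\log N)A^5$, and since $k_1+k_2$ can exceed $1$ this is \emph{not} good enough directly — one wants the power of $N$ to be exactly $1$.

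The main obstacle, then, is the divisor-bound input: in the $(x,x^3)$ case the algebraic identity $(x_1+x_2+x_3)^3-(x_1^3+x_2^3+x_3^3)=3(x_1+x_2)(x_2+x_3)(x_3+x_1)$ let us parametrize solutions of $c_3(0)$ (and similarly $c_2(l)$) by divisors of a single integer of size $O(N^3)$, giving the $\exp(\kappa\log N/\log\log N)$ bound and crucially keeping the power of $N$ at $1$. For general $\phi_1,\phi_2$ no such factorization is available; instead one should exploit that $\phi_1,\phi_2$ are algebraically independent in the relevant sense (guaranteed by $k_1<k_2$, which forces $\phi_1'$ and $\phi_2'$ to be linearly independent over $\Q$ after discarding constants, and $k_2\geq3$). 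The strategy I would adopt is: after foliating, reinterpret the system $\sum_{i=1}^t(\phi_1(x_i)-\phi_1(y_i))=0$, $\sum_{i=1}^t(\phi_2(x_i)-\phi_2(y_i))=l$ (for $t=2,3$) as one imposing two polynomial constraints on $2t$ variables, and bound the number of solutions with $l\neq 0$ fixed by $O(N^{2t-2+\epsilon})$ using a resultant/Bézout argument: fixing all but two variables, the two equations cut out a zero-dimensional variety of bounded degree provided $\phi_1,\phi_2$ are not both polynomials in a common lower-degree polynomial, and such a degeneracy is excluded by $k_1<k_2$ via an elementary argument on degrees. For $c_2(l)$ this gives $c_2(l)\lesssim N^{\epsilon}$ uniformly in $l\neq0$; for $c_3(0)$ one argues as in the alternate argument given for \eqref{kw3}, using that $\phi_2(x_1+x_2+x_3)-\phi_2(x_1)-\phi_2(x_2)-\phi_2(x_3)$, restricted to the subspace $x_1+x_2+x_3=M$, is a nonzero polynomial vanishing whenever two variables are ``opposite,'' so that its value $L$ satisfies $|L|\lesssim N^{k_2}$ and, for $L\neq0$, a divisor-type parametrization again applies, keeping the count at $O(A^3 N^{\epsilon})$ rather than $O(A^3 N^{k_2})$. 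Once these refined bounds $c_2(l)\lesssim N^\epsilon$ and $c_2(0)c_3(0)\lesssim N^\epsilon A^5$ are in hand — note the power of $N$ has dropped to $\epsilon$ — the only surviving factor of $N$ is the single $(8N+1)$ from discarding oscillation over $h$, wait: that discard loses $O(N^{k_1})$, so one must instead only discard oscillation over the original \emph{linear} variable, which is exactly why the statement requires $k_1<k_2$ with the \emph{smaller} polynomial playing the role of $n$; concretely, one should foliate over $h=\sum(\phi_1(x_i)-\phi_1(y_i))$ but retain that $|h|\lesssim N^{k_1}$ only after checking $k_1=1$ reduces to Theorem~\ref{theorem:main}'s situation, and for $k_1\geq2$ one instead passes to the auxiliary operator $Fa(\alpha_1,\alpha_2,\alpha_3)=\sum a(n)e(\alpha_1\phi_1(n)+\alpha_2\phi_2(n)+\alpha_3 n)$ and foliates over the genuinely linear value $\sum(x_i-y_i)$, which ranges over $O(N)$ integers, recovering the clean factor $N$. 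The serious work is thus the uniform-in-$l$ divisor/resultant bound for $c_2(l)$ and the degenerate-case analysis ruling out a common polynomial substructure of $\phi_1,\phi_2$; everything downstream is bookkeeping identical to Section~\ref{section:cubiclinear}.
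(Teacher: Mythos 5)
Your skeleton agrees with the paper's: reduce to characteristic functions via Lemma~\ref{lemma:basic_Lorentz}, dispose of the case $k_1\ge 2$ separately (your auxiliary three-phase operator works; the paper instead interpolates \eqref{eq:1derivative:p=6} with \eqref{eq:2derivatives} to get the stronger $N^{1/15+\epsilon}$), then for $\phi_1(x)=x$ foliate over the linear value and over $l$, reducing everything to $\max_{l\ne 0}c_2(l)\lesssim_\epsilon N^\epsilon$ and $c_2(0)c_3(0)\lesssim_\epsilon N^\epsilon A^5$. But both of these crucial inputs --- which you yourself flag as ``the serious work'' --- are not delivered by the arguments you propose. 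A resultant/B\'ezout argument for $c_2(l)$ with $l\ne 0$ can only fix two of the four variables and solve a zero-dimensional system for the remaining two; that yields $c_2(l)=O(A^2)$, not $O(N^\epsilon)$, and feeding $O(A^2N^\epsilon)$ into $\sum_{l\ne 0}c_2(l)c_3(l)$ produces $O(A^7N^\epsilon)$, which is useless. (Your own displayed bound $O(N^{2t-2+\epsilon})$ is $O(N^{2+\epsilon})$ for $t=2$, contradicting the $c_2(l)\lesssim N^\epsilon$ you then assert.) The uniform bound with no power of $A$ genuinely requires a divisor parametrization of \emph{all four} variables: the paper notes that $\phi(x_1)-\phi(y_1)+\phi(x_2)-\phi(x_1-y_1+x_2)$ vanishes on $x_1=y_1$ and on $x_2=y_1$, hence equals $(x_1-y_1)(x_2-y_1)\psi(x_1,y_1,x_2)$ with $\psi\in\Z[x_1,y_1,x_2]$; a solution with value $l\ne 0$ then determines a factorization $e_1e_2e_3=l$ (at most $8\tau_3(|l|)$ choices), after which $y_1$ is confined to $O(1)$ values because $-e_1e_2\psi(y_1+e_1,y_1,y_1+e_2)$ is the second-order difference polynomial of $\phi$, non-constant in $y_1$ exactly because $k_2\ge 3$.

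The second gap is worse: your treatment of $c_3(0)$ relies on $\phi(x_1+x_2+x_3)-\phi(x_1)-\phi(x_2)-\phi(x_3)$ vanishing whenever two variables are opposite, but that is the identity special to $x^3$ and fails already for $\phi(x)=x^4$ (setting $x_1=-x_2$ leaves $-2x_1^4\ne 0$), so no factorization into three factors, and hence no $\tau_3$-type parametrization, is available in general. The paper avoids this entirely by observing that $c_3(0)$ is precisely $\|E\1_{\set}\|_{L^6(\T^2)}^6$, so the sixth-moment estimate \eqref{eq:1derivative:p=6} (applicable since $\phi_1'=1$ and $\phi_2'$ are linearly independent over $\Q$) gives $c_3(0)\lesssim_\epsilon N^\epsilon A^3$ immediately, while $c_2(0)=O(A^2)$ follows from the factorization above together with Lemma~\ref{lemma:sde}. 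Until you replace the B\'ezout bound for $c_2(l)$ by the divisor argument and the false factorization for $c_3(0)$ by the $L^6$ input (or some other correct argument), the proof does not close.
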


In situations in which $\phi_1$ is not linear, it follows by interpolating between the $6$-th 
moment estimate \eqref{eq:1derivative:p=6} and the $12$-th moment estimate 
\eqref{eq:2derivatives} that one has the bound
\[
\|\extend{a}\|_{L^{10}(\T^2)} \lesssim_\epsilon N^{\frac{1}{15}+\epsilon}
\|a\|_{\ell^2(\Z)}
\]
for all $\epsilon>0$. Consequently, in the proof below we may assume that $\phi_1$ is 
linear. Indeed it suffices to take $\phi_1(x)=x$.\par 

We will need a simple variant of \cite[Lemma 2]{Wsde} in the proof of 
Theorem~\ref{theorem:2polys}. We include a proof for the sake of completeness.

\begin{lemma}\label{lemma:sde}
Let $\psi(x_1,\dots ,x_s)$ be a non-zero multivariate polynomial with integer coefficients 
of total degree $k$. If $\set \subset \Z$ is a finite set of cardinality $A$, then the number 
of integer solutions to the equation $\psi(\vector{x})=0$ with $x_i \in \set$ for 
$i=1,\dots ,s$ is at most $kA^{s-1}$. 
\end{lemma}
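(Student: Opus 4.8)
The plan is to prove Lemma~\ref{lemma:sde} by induction on the number of variables $s$. The base case $s=1$ is immediate: a non-zero univariate polynomial of degree $k$ has at most $k$ roots, so there are at most $k = kA^0$ solutions $x_1\in\set$. For the inductive step, fix $\psi(x_1,\dots,x_s)$ of total degree $k$ and write it as a polynomial in $x_s$ with coefficients that are polynomials in $x_1,\dots,x_{s-1}$:
\[
\psi(x_1,\dots,x_s)=\sum_{j=0}^{m}\psi_j(x_1,\dots,x_{s-1})x_s^{\,j},
\]
where $m\le k$ and $\psi_m$ is not identically zero. First I would split the count according to whether $\psi_m(x_1,\dots,x_{s-1})=0$ or not. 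In the first case, $\psi_m$ is a non-zero polynomial in $s-1$ variables of total degree at most $k-m\le k$ (indeed of degree at most $k$), so by the inductive hypothesis there are at most $kA^{s-2}$ choices of $(x_1,\dots,x_{s-1})\in\set^{s-1}$; each can be completed by at most $A$ choices of $x_s\in\set$, giving at most $kA^{s-1}$ solutions of this type.

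In the complementary case, $(x_1,\dots,x_{s-1})$ is one of at most $A^{s-1}$ tuples for which $\psi_m(x_1,\dots,x_{s-1})\ne 0$; for each such fixed tuple, $\psi(x_1,\dots,x_{s-1},x_s)$ is a non-zero polynomial in $x_s$ alone of degree exactly $m$, hence has at most $m\le k$ roots $x_s\in\set$. This bounds the number of solutions of this second type by $kA^{s-1}$ as well. Adding the two contributions would give $2kA^{s-1}$, which is off by a factor of $2$, so the partition above needs to be slightly sharper. The fix is to not double-count: in the first regime I should instead invoke the inductive hypothesis to bound the $(x_1,\dots,x_{s-1})$ with $\psi_m=0$ by $kA^{s-2}$, but then observe that for those tuples one does not get to multiply by the full $A$ — rather, one should peel off the top \emph{non-vanishing} coefficient. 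Concretely, I would argue: if \emph{every} coefficient $\psi_j$ vanishes at $(x_1,\dots,x_{s-1})$ then $\psi\equiv 0$ in $x_s$ and we get $A$ completions, but the set of such tuples is contained in the zero set of $\psi_m$, which has size $\le kA^{s-2}$ by induction applied to the degree-$\le k$ polynomial $\psi_m$; meanwhile if some $\psi_j$ is non-zero at the tuple, we get at most $k$ completions from the largest such $j$. This yields $kA^{s-2}\cdot A + (A^{s-1})\cdot\text{(correction)}$, which still overshoots.

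The cleanest route — and the one I would ultimately commit to — avoids the sum entirely by a single alternative: let $j_0$ be the \emph{smallest} index with $\psi_{j_0}\not\equiv 0$ as a polynomial in $x_1,\dots,x_{s-1}$ is not the right pivot either; instead induct on total degree and variables together as in \cite[Lemma 2]{Wsde}. For each tuple $(x_1,\dots,x_{s-1})\in\set^{s-1}$, either $\psi(x_1,\dots,x_{s-1},x_s)\equiv 0$ as a polynomial in $x_s$, in which case all coefficients vanish and in particular $\psi_m(x_1,\dots,x_{s-1})=0$; or it has degree $d\le m$ in $x_s$ with non-zero leading coefficient, contributing at most $d\le k$ roots. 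Let $S$ be the set of tuples of the first type; then $S$ lies in $\{\psi_m=0\}$, so $|S|\le kA^{s-2}$ by the inductive hypothesis in $s-1$ variables (valid since $\deg\psi_m\le k$ and $\psi_m\ne 0$). The total count is therefore at most $|S|\cdot 1 + (A^{s-1}-|S|)\cdot k$; but a tuple in $S$ contributes up to $A$ completions, not $1$, so this bound reads $|S|\cdot A + (A^{s-1}-|S|)\cdot k \le kA^{s-2}\cdot A + A^{s-1}\cdot k$. Since $k\ge 1$, this is $\le 2kA^{s-1}$. To recover the sharp constant $k$ I would instead note that whenever $|S|>0$ we may apply the argument to a variable in which $\psi_m$ genuinely depends, reducing to the case $|S|=0$ after relabeling; and when $|S|=0$ the bound is exactly $kA^{s-1}$. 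The main obstacle, then, is purely bookkeeping: organizing the induction so that the "degenerate fiber" set $S$ is absorbed into a lower-dimensional count without incurring an extra multiplicative constant — exactly the subtlety handled in \cite[Lemma 2]{Wsde}, which I would follow, choosing at each stage a variable $x_i$ in which $\psi$ has positive degree and reducing to either the leading-coefficient polynomial (fewer variables, degree $\le k$) or a univariate polynomial (at most $k$ roots), so that the clean bound $kA^{s-1}$ propagates.
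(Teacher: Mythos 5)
Your overall strategy --- induction on the number of variables, writing $\psi$ as a polynomial in $x_s$ and splitting according to whether the leading coefficient $\psi_m(x_1,\dots,x_{s-1})$ vanishes --- is exactly the paper's, but your execution has a genuine gap: you only establish the bound $2kA^{s-1}$, and none of your subsequent attempts to remove the factor of $2$ is coherent. The suggestion to ``relabel variables so as to reduce to the case $|S|=0$'' cannot work, since $S$ depends on the set $\set$ and not merely on the polynomial; and your closing appeal to \cite[Lemma 2]{Wsde} simply defers the difficulty rather than resolving it.

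The fix is an observation you actually write down and then discard. Since the term $\psi_m(x_1,\dots,x_{s-1})\,x_s^{m}$ occurs in a polynomial of total degree $k$, the leading coefficient $\psi_m$ has total degree at most $k-m$, not merely at most $k$. Applying the inductive hypothesis to $\psi_m$ with this sharper degree bounds the number of tuples $(x_1,\dots,x_{s-1})\in\set^{s-1}$ with $\psi_m=0$ by $(k-m)A^{s-2}$, and these contribute at most $(k-m)A^{s-1}$ solutions after choosing $x_s\in\set$ freely. In the complementary case the specialised polynomial in $x_s$ has degree exactly $m$, so each of the at most $A^{s-1}$ remaining tuples admits at most $m$ (not $k$) completions, contributing at most $mA^{s-1}$ solutions. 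The two contributions sum to precisely $(k-m)A^{s-1}+mA^{s-1}=kA^{s-1}$, which is how the paper closes the induction. For the applications in the paper only a bound of the form $O_k(A^{s-1})$ is ever used, so your weaker constant would suffice downstream, but it does not prove the lemma as stated.
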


\begin{proof} We proceed by induction on $s$. The desired conclusion plainly holds when 
$s=1$. Suppose that the conclusion of the lemma holds for each $s$ with $1\le s<t$, and 
let $\Psi \in \Z [x_1,\dots ,x_t]$ be a non-zero polynomial of total degree $k$. By 
rearranging variables, if necessary, we may suppose that $\Psi(x_1,\ldots,x_t)$ is a 
polynomial in $x_t$ with at least one non-zero coefficient. Let the degree of $\Psi $ with 
respect to $x_t$ be $r$, and suppose that the coefficient of $x_t^r$ is the polynomial 
$\Phi (x_1,\dots ,x_{t-1})$. Then $\Phi$ is a non-zero polynomial in $t-1$ variables of 
degree at most $k-r$. By the inductive hypothesis, the number of solutions of the equation 
$\Phi (x_1,\dots ,x_{t-1})=0$ with $x_i\in \set$ $(1\le i\le t-1)$ is at most $(k-r)A^{t-2}$. 
Then the number of solutions $(x_1,\dots ,x_t)$ of $\Psi (x_1,\ldots ,x_t)=0$ satisfying 
$\Phi (x_1,\dots ,x_{t-1})=0$ and with $x_i\in \set$ $(1\le i\le t)$ is at most 
$(k-r)A^{t-1}$. Meanwhile, if $\Phi (x_1,\dots ,x_{t-1})$ is non-zero then $x_t$ satisfies 
a non-trivial polynomial of degree $r$. So there are at most $rA^{t-1}$ solutions with 
$\Phi (x_1,\ldots ,x_{t-1})$ non-zero. We therefore conclude that there are at most 
$kA^{t-1}$ solutions altogether, and the inductive hypothesis holds with $t+1$ replacing 
$t$. This completes the proof of the lemma.
\end{proof}

\begin{proof}[Proof of Theorem~\ref{theorem:2polys}]
By the remark above we may assume that $\phi_1(x)=x$. As such, we write $\phi$ in 
place of $\phi_2$ and $k$ in place of $k_2$ in the proof. By 
Lemma~\ref{lemma:basic_Lorentz} we only need to prove \eqref{eq:p=10} for 
sequences $a$ which are the characteristic function of some subset 
$\set \subset \Z \cap [-N,N]$. Therefore, we want to bound the number of solutions to the 
system of equations 
\begin{align*}
\sum_{i=1}^3 (\phi(x_i)-\phi(y_i)) &= \sum_{i=4}^5 (\phi(x_i)-\phi(y_i)),\\
\sum_{i=1}^3 (x_i-y_i) &= \sum_{i=4}^5 (x_i-y_i), 
\end{align*}
with $\vector{x},\vector{y} \in \set^5$. As in the argument employed above to deliver 
the relation \eqref{kw20}, we find that at the expense of a factor of $8N+1$ we only 
need to bound the number of solutions to the system of equations 
\begin{equation}\label{kw21}
\begin{aligned}
\sum_{i=1}^3 (\phi(x_i)-\phi(y_i))&= \sum_{i=4}^5 (\phi(x_i)-\phi(y_i)),\\
\sum_{i=1}^3 (x_i-y_i) =\; &0 = \sum_{i=4}^5 (x_i-y_i), 
\end{aligned}
\end{equation}
with $\vector{x},\vector{y} \in \set^5$.\par

When $t\in \N$ and $l\in \Z$, we now write $c_t(l)$ for the number of solutions of the 
simultaneous equations
\[
\sum_{i=1}^t(\phi(x_i)-\phi(y_i))=l\quad \text{and}\quad \sum_{i=1}^t(x_i-y_i)=0,
\]
with $\vector{x},\vector{y}\in \mathcal{A}^t$. Then, by foliating over common values in 
the equation \eqref{kw21} involving $\phi$, just as in our proof of 
Theorem~\ref{theorem:main}, we find that a bound analogous to \eqref{kw2} holds in 
our present situation. That is, it follows via orthogonality that there exists a positive 
constant $C$ depending on the coefficients of $\phi$ such that 
\begin{equation}\label{eq:p=10:general}
\| \extend a\|^{10}_{10} 
\leq (8N+1) \sum_{|l|\le CN^k} c_2(l)c_3(l).
\end{equation}
Our argument again divides into two parts according to whether the summand $l$ is zero 
or non-zero.\par

In the present circumstances, one sees that $c_2(0)$ counts the number of solutions of 
the simultaneous equations 
\[
\phi(x_1)-\phi(y_1)+\phi(x_2)=\phi(y_2) 
\quad \text{and} \quad x_1-y_1+x_2=y_2.
\]
Upon substitution of the latter equation into the former, one finds that 
\[
\phi(x_1)-\phi(y_1)+\phi(x_2)-\phi(x_1-y_1+x_2) = 0.
\]
The polynomial on the left hand side has factors $x_1-y_1$ and $y_1-x_2$, whence there 
is a quotient polynomial $\psi(x_1,y_1,x_2)$ having integer coefficients with the property 
that 
\[
(x_1-y_1)(x_2-y_1)\psi(x_1,y_1,x_2)=0.
\]
The solutions with $x_1=y_1$ or $x_2=y_1$ contribute at most $2A^2$ solutions to the 
count $c_2(0)$. If, on the other hand, neither $x_1=y_1$ nor $y_1=x_2$, then 
$\psi(x_1,y_1,x_2)=0$. By Lemma \ref{lemma:sde}, the number of solutions of 
$\psi(x_1,y_1,x_2)=0$ with $x_1,y_1,x_2 \in \set$ is $O(A^2)$. Since $y_2$ is fixed by a 
choice for $x_1,y_1,x_2$, one infers that 
\begin{equation}\label{c20}
c_2(0) = O(A^2).
\end{equation} 
The estimate $c_3(0) \lesssim N^\epsilon A^3$ is immediate from 
\eqref{eq:1derivative:p=6}, and thus we conclude that 
\begin{equation}\label{kw9}
c_2(0)c_3(0) \lesssim N^\epsilon A^5.
\end{equation}

We turn next to the contribution in \eqref{eq:p=10:general} from the non-zero summands 
$l$. We begin by observing that $c_2(l)$ counts the number of solutions of the 
simultaneous equations 
\[
\phi(x_1)-\phi(y_1)+\phi(x_2)-\phi(y_2) = l
\quad \text{and} \quad x_1-y_1+x_2=y_2,
\]
with $x_1,y_1,x_2,y_2\in \set$.
As above, these equations imply that 
\[
(x_1-y_1)(x_2-y_1)\psi(x_1,y_1,x_2)=l.
\]
There are at most $8\tau_3(|l|)$ possible choices for non-zero integers $e_1,e_2,e_3$ 
with $e_1e_2e_3=l$, 
\begin{equation}\label{kw10a}
x_1-y_1=e_1, \quad x_2-y_1=e_2 
\quad \text{and} \quad 
\psi(x_1,y_1,x_2)=e_3. 
\end{equation}
For any fixed such choice of $\vector{e}$, one has $\psi(y_1+e_1,y_1,y_1+e_2)=e_3$. 
One has 
\[
-e_1e_2\psi(y_1+e_1,y_1,y_1+e_2)=\phi(y_1+e_1+e_2)-\phi(y_1+e_2)-\phi(y_1+e_1)+
\phi(y_1).
\]
The right hand side here is the second order difference polynomial associated with $\phi$, 
which is non-constant as a polynomial in $y_1$ because $\deg(\phi) = k \geq 3$. Thus the 
number of solutions for $y_1 \in \set$ to the equation $\psi(y_1+e_1,y_1,y_1+e_2)=e_3$ 
is $O(1)$. Any fixed choice of $y_1$ determines $x_1$ and $x_2$ via \eqref{kw10a}, and 
then $y_2=x_1-y_1+x_2$ is also determined. In this way we deduce that 
\begin{equation}\label{c2}
\max_{1 \leq |l| \leq CN^k} c_2(l) 
\lesssim \max_{1 \leq n \leq CN^k} \tau_3(n) 
\lesssim_\epsilon N^\epsilon. 
\end{equation}

Applying our newly obtained bound for $c_2(l)$ we find that 
\[
\sum_{1 \leq |l| \leq CN^k} c_2(l) c_3(l)
\lesssim_\epsilon N^\epsilon \sum_{1 \leq |l| \leq CN^k} c_3(l). 
\]
The last sum is bounded above by the number of solutions of the equation 
\[
x_1+x_2+x_3=y_1+y_2+y_3
\]
with $\vector{x},\vector{y} \in \set^3$, which is $O(A^5)$. 
Thus, 
\[
\sum_{1 \leq |l| \leq CN^k} c_2(l) c_3(l)
\lesssim_\epsilon N^\epsilon A^5,
\]
and we infer from \eqref{kw9} and \eqref{eq:p=10:general} that
\[
\|Ea\|_{10}^{10}\lesssim_\epsilon N^{1+\epsilon} A^5.
\]
The conclusion of the theorem now follows by invoking Lemma 
\ref{lemma:basic_Lorentz}. 
\end{proof}

% ---
\section{Discrete restriction for univariate polynomials}\label{section:1poly}
% ---

For $\phi$, a polynomial with integer coeffients of degree at least 3, we (re-)define our 
extension operator as 
\[
\extend a (\alpha) := \sum_{|n|\le N}a(n)e(\alpha \phi(n)),
\]
and we also make use of the auxiliary extension operator
\[
Fa(\alpha,\beta) := \sum_{|n|\le N}a(n)e(\alpha \phi(n)+\beta n) .
\]
These operators for a quadratic polynomial $\phi$ were studied by Bourgain in 
\cite{Bourgain:Squares}. The main goal of this section is the proof of the following 
theorem.

\begin{theorem}\label{theorem:cubes8}
Suppose that $\phi$ is a polynomial with integer coefficients of degree $k \geq 3$. 
For all $\epsilon>0$, there exists a constant $C_\epsilon>0$ such that 
\begin{equation}\label{eq:cubes4}
\| \extend a \|_{L^4(\T)}^4 \leq C_\epsilon N^{\epsilon} \|a\|_{\ell^2(\Z)}^4.
\end{equation}
and 
\begin{equation}\label{eq:cubes8}
\| \extend a \|_{L^8(\T)}^8 \leq C_\epsilon N^{1+\epsilon} \|a\|_{\ell^2(\Z)}^8.
\end{equation}
When $k=3$ and $p>8$, we have the sharp bound 
\begin{equation}\label{eq:cubes>8}
\| \extend a \|_{L^p(\T)} \lesssim_p N^{\frac{1}{2}-\frac{3}{p}} \|a\|_{\ell^2(\Z)}.
\end{equation}
\end{theorem}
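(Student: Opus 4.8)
The plan is to reduce, as in Theorems~\ref{theorem:main} and \ref{theorem:2polys}, to the case where $a=\1_\set$ is the indicator of a subset $\set\subset\Z\cap[-N,N]$ of cardinality $A$, since Lemma~\ref{lemma:basic_Lorentz} then upgrades such a bound to arbitrary $a\in\ell^2$ at the cost of an acceptable $(\log N)^{1/2}$ factor (absorbed in $N^\epsilon$). For \eqref{eq:cubes4}, $\|\extend a\|_4^4$ counts solutions of $\phi(x_1)-\phi(y_1)=\phi(x_2)-\phi(y_2)$, equivalently $\phi(x_1)-\phi(y_1)-\phi(x_2)+\phi(y_2)=0$, with $\vector x,\vector y\in\set^2$. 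The diagonal contribution $x_1=y_1$, $x_2=y_2$ (and its companions) is $O(A^2)$. Off the diagonal this is a non-zero polynomial equation in four variables from $\set$, and since $\phi$ has degree $k$ the total degree is $k$, so Lemma~\ref{lemma:sde} gives $O(A^3)$ solutions — but that is too weak. Instead I would foliate over the common value $\phi(x_1)-\phi(y_1)=l$: for $l=0$ one gets $x_1=y_1$ (as $\phi$ is injective on integers away from finitely many coincidences, or more robustly $\phi(x)-\phi(y)$ has only the factor $x-y$ producing $O(A)$ pairs plus $O(A)$ genuine diagonal), while for $l\ne0$, writing $\phi(x_1)-\phi(y_1)=(x_1-y_1)\rho(x_1,y_1)$ with $\rho$ of degree $k-1$, each factorization $l=e_1e_2$ with $x_1-y_1=e_1$ fixes $x_1,y_1$ up to $O(1)$ choices (as $\rho(y_1+e_1,y_1)=e_2$ is a non-constant polynomial in $y_1$ because $k\ge2$), so $r(l):=\#\{(x,y)\in\set^2:\phi(x)-\phi(y)=l\}\lesssim\tau(|l|)\lesssim_\epsilon N^\epsilon$. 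Then $\|\extend a\|_4^4=\sum_l r(l)^2\lesssim_\epsilon N^\epsilon\sum_l r(l)=N^\epsilon A^2$, giving \eqref{eq:cubes4}.

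For \eqref{eq:cubes8}, the eighth moment counts solutions of $\sum_{i=1}^4\phi(x_i)=\sum_{i=1}^4\phi(y_i)$ with $\vector x,\vector y\in\set^4$, i.e.\ $\sum_{j=1}^4 c_j(l)$-style foliation over the split value. Here I would pass to the auxiliary operator $Fa$: the key inequality, proved exactly as \eqref{kw20}, is that at the cost of a factor $O(N)$ one may additionally impose $x_1+x_2+x_3+x_4-y_1-y_2-y_3-y_4=0$. Writing $d_t(l)$ for the number of solutions of $\sum_{i=1}^t(\phi(x_i)-\phi(y_i))=l$ together with $\sum_{i=1}^t(x_i-y_i)=0$ and $\vector x,\vector y\in\set^t$, orthogonality gives $\|\extend a\|_8^8\lesssim N\sum_{|l|\le CN^k}d_2(l)^2$. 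For $l=0$, $d_2(0)=O(A^2)$ exactly as in the derivation of \eqref{c20} (substitute the linear constraint $y_2=x_1-y_1+x_2$ into $\phi(x_1)-\phi(y_1)+\phi(x_2)-\phi(y_2)=0$, extract the factors $(x_1-y_1)(x_2-y_1)$, and apply Lemma~\ref{lemma:sde} to the quotient, using only $k\ge2$ here). For $l\ne0$, $d_2(l)\lesssim_\epsilon N^\epsilon$ precisely as in the derivation of \eqref{c2}: the relevant second-order difference polynomial $\phi(y_1+e_1+e_2)-\phi(y_1+e_1)-\phi(y_1+e_2)+\phi(y_1)$ is non-constant in $y_1$ when $k\ge3$, pinning $y_1$ to $O(1)$ values after fixing a divisor triple of $l$. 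Hence $\sum_l d_2(l)^2\lesssim_\epsilon N^\epsilon(A^2)^2+N^\epsilon\sum_{l\ne0}d_2(l)\lesssim_\epsilon N^\epsilon\bigl(A^4+\sum_l d_2(l)\bigr)$, and $\sum_l d_2(l)$ counts $x_1+x_2=y_1+y_2$ with entries in $\set$, which is $O(A^3)$; so $\|\extend a\|_8^8\lesssim_\epsilon N^{1+\epsilon}(A^4+A^3)\lesssim_\epsilon N^{1+\epsilon}A^4$, which is \eqref{eq:cubes8}.

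Finally, for $k=3$ and $p>8$, \eqref{eq:cubes>8} follows from the $\epsilon$-removal machinery: \eqref{eq:cubes8} is the $p=8$ endpoint with an $\epsilon$-loss and the expected power of $N$, and the curve $(x,x^3)$ (equivalently the pair $(\phi(x),x)$ feeding $Fa$) satisfies the Wronskian/curvature hypotheses, so one applies \cite[Theorem~1.4 and Lemma~3.1]{HH:k-paraboloids} with $d=1$, $k=3$ — taking $C=0$, $p=8$, $q>8$, and $\zeta$ a small admissible constant as in the proof of Theorem~\ref{theorem:KdV10} — to convert \eqref{eq:cubes8} into the clean bound \eqref{eq:cubes>8}. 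The main obstacle is purely bookkeeping: one must check that the quotient polynomial $\psi$ arising from dividing out $(x_1-y_1)(x_2-y_1)$ is genuinely non-zero and of controlled degree, and that the second-difference polynomial is non-constant in $y_1$, both of which hinge on $k\ge3$; beyond that the argument is a direct transcription of Section~\ref{section:2polys} with the linear variable now serving only as the foliating device rather than as $\phi_1$.
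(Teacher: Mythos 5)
Your fourth-moment argument and your treatment of the range $p>8$ match the paper's proof in all essentials: the paper separates the cases $\phi(x_1)-\phi(y_1)=0$ and $\phi(x_1)-\phi(y_1)=l\neq 0$ rather than phrasing things through a representation function $r(l)$, but it is the same divisor-plus-Lemma~\ref{lemma:sde} computation (just be careful to treat $r(0)=O(A)$ separately before writing $\sum_l r(l)^2\lesssim N^\epsilon\sum_l r(l)$), and the $\epsilon$-removal step is carried out in the paper via \cite[Theorem~4.1]{HH:k-paraboloids} with $\tau=1/4$ rather than the two-dimensional Theorem~1.4, which is a citation detail only. The eighth moment, however, contains a genuine gap in the reduction step.

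You claim that at the cost of a factor $O(N)$ one may impose the single relation $\sum_{i=1}^4(x_i-y_i)=0$ and that orthogonality then yields $\|Ea\|_8^8\lesssim N\sum_l d_2(l)^2$. The first half is fine: $\|Ea\|_8^8\le(16N+1)\|Fa\|_{L^8(\T^2)}^8$. But $\|Fa\|_{L^8(\T^2)}^8$ counts solutions subject only to the aggregate relation $\sum_{i=1}^4(x_i-y_i)=0$, which does not split into the two separate relations $\sum_{i=1}^2(x_i-y_i)=0=\sum_{i=3}^4(x_i-y_i)$ that your quantity $\sum_l d_2(l)^2$ encodes; this differs from the situation behind \eqref{kw20}, where the linear equation is already part of the system being counted and therefore forces the two half-sums to agree. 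Writing $d_2^{(h)}(l)$ for the count with $\sum_{i=1}^2(\phi(x_i)-\phi(y_i))=l$ and $\sum_{i=1}^2(x_i-y_i)=h$, what one actually obtains is $\|Fa\|_{L^8(\T^2)}^8=\sum_{l,h}d_2^{(h)}(l)^2$, and for $h\neq0$ the substitution $y_2=x_1+x_2-y_1-h$ destroys the factorisation through $(x_1-y_1)(x_2-y_1)$, so the divisor argument giving the bound $O(N^\epsilon)$ is unavailable there. Imposing both split relations honestly costs a factor $N^2$ and yields only $N^{2+\epsilon}A^4$. This is precisely why the paper uses the asymmetric mean value $\int_{\T^2}|Fa(\alpha,\beta)|^4|Ea(\alpha)|^4\,{\rm d}\beta\,{\rm d}\alpha$: foliating over $h=x_1-y_1+x_2-y_2$ alone, at cost $8N+1$, produces $\sum_l c_2(l)c_2'(l)$ with $c_2'$ carrying no linear constraint. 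The price is that $c_2'(0)$ is no longer $O(A^2)$; it equals $\|Ea\|_4^4$, which is exactly where the fourth-moment estimate \eqref{eq:cubes4} is consumed, while the terms with $l\neq0$ are handled by $\max_{l\neq0}c_2(l)\lesssim N^\epsilon$ together with the trivial bound $\sum_l c_2'(l)\le A^4$. Your divisor computations for $d_2(0)$ and for $d_2(l)$ with $l\neq0$ are correct and coincide with \eqref{c20} and \eqref{c2}; it is only the passage from the eighth moment to $\sum_l d_2(l)^2$ that fails.
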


When $\phi(n)$ has degree 3, the bound \eqref{eq:cubes8} is essentially sharp, up to the 
factor of $N^\epsilon$. Furthermore, when $a(n)$ is identically $1$, it follows from 
\cite[Theorem~2]{Vaughan} that there exists a positive constant $C$ such that 
\[
\| \extend a \|_{L^8(\T)}^8 \leq CN\|a\|_{\ell^2(\Z)}^8.
\]
Estimate \eqref{eq:cubes8} is not sharp in general. When $k \geq 27$, standard 
arguments lead from \cite[Theorem~1.1]{Marmon} to the conclusion that in the special 
case $\phi(n)=n^k$, there exists a positive constant $\delta$ depending on $k$ such that 
\[
\|Ea\|_{L^8(\T)}^8 \lesssim_\epsilon N^{1-\delta+\epsilon} \|a\|_{\ell^2(\Z)}^8
\]
for all $\epsilon>0$. Indeed, one may take
\[
1-\delta =\frac{16}{3\sqrt{3k}}+\max\left\{ \frac{2}{\sqrt{k}},
\frac{1}{\sqrt{k}}+\frac{6}{k+3}\right\}.
\]
Note that $1-\delta \rightarrow 0$ as $k\rightarrow \infty$. We expect the following sharp 
bound to hold in general.

\begin{conjecture}\label{conjecture:1poly}
Let $\phi$ be a polynomial with integer coefficients of degree $k \geq 3$. 
Then for each $p \in [1,\infty]$, we have 
\[
\| \extend{a} \|_{L^{p}(\T^2)}
\lesssim 
\left( 1+N^{\frac{1}{2}-\frac{k}{p}} \right) \|a\|_{\ell^2(\Z)},
\]
as $N \to \infty$.
\end{conjecture}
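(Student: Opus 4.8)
The plan is to prove Theorem~\ref{theorem:cubes8} by the same foliation strategy used for Theorem~\ref{theorem:main} and Theorem~\ref{theorem:2polys}, adapted to a single curve. By Lemma~\ref{lemma:basic_Lorentz} it suffices to treat $a = \1_{\set}$ for $\set \subset \Z \cap [-N,N]$ of cardinality $A$, incurring only a loss of $(\log N)^{1/2}$ which is absorbed into $N^\epsilon$. For \eqref{eq:cubes4}, $\|\extend a\|_4^4$ counts solutions of $\phi(x_1)-\phi(y_1)=\phi(x_2)-\phi(y_2)$ with $\vector x,\vector y\in\set^2$; the diagonal $\{x_1,y_1\}=\{x_2,y_2\}$-type contributions give $O(A^2)$, and for the off-diagonal terms one fixes $x_1,x_2,y_1$ and notes that $\phi(y_2) = \phi(x_1)-\phi(y_1)+\phi(x_2)$ has $O_k(1)$ solutions $y_2$ unless the right side forces $\phi$ to be constant — so $\|\extend a\|_4^4 \lesssim A^2 \lesssim N^\epsilon\|a\|_2^4$ after the Lorentz step. (In fact this is the classical divisor-bound estimate and one could instead cite that $\|\extend a\|_4^4$ counts representations governed by $\tau_k$.)

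For the octic bound \eqref{eq:cubes8}, I would first reduce to the auxiliary operator $F$: introduce the linear companion $\beta n$ so that, exactly as in the passage preceding \eqref{kw20}, at the cost of a factor $O(N)$ one reduces $\|\extend a\|_8^8$ to counting solutions of
\[
\sum_{i=1}^4(\phi(x_i)-\phi(y_i))=0, \qquad \sum_{i=1}^4(x_i-y_i)=0,
\]
with $\vector x,\vector y\in\set^4$ — i.e.\ to $\|Fa\|_{L^8(\T^2)}^8$ up to the factor $8N+1$. Then I would foliate over the common value $l$ of $\phi(x_1)+\phi(x_2)-\phi(y_1)-\phi(y_2)$ subject to $x_1+x_2-y_1-y_2=0$: writing $c_2(l)$ for the corresponding counting function (as in \eqref{eq:p=10:general}), orthogonality gives
\[
\|Fa\|_{L^8(\T^2)}^8 \leq \sum_{|l|\le CN^k} c_2(l)^2.
\]
The $l=0$ term is $c_2(0)^2$, and the substitution $y_2 = x_1+x_2-y_1$ turns the cubic-type equation into $\phi(x_1)-\phi(y_1)+\phi(x_2)-\phi(x_1+x_2-y_1)=0$, whose left side is divisible by $(x_1-y_1)(x_2-y_1)$; the quotient polynomial vanishes on $O(A^2)$ points by Lemma~\ref{lemma:sde}, and the diagonal contributes $O(A^2)$, so $c_2(0) = O(A^2)$, contributing $O(A^4)$. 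For $l\neq 0$, the same factorization writes $(x_1-y_1)(x_2-y_1)\psi(x_1,y_1,x_2)=l$; as in the proof of Theorem~\ref{theorem:2polys}, there are $O(\tau_3(|l|))$ admissible triples $(e_1,e_2,e_3)$, each reducing to a non-constant polynomial equation in $y_1$ (here one uses $k\ge 3$, so the second difference of $\phi$ is non-constant), whence $\max_{l\ne 0} c_2(l) \lesssim_\epsilon N^\epsilon$. Therefore
\[
\sum_{1\le|l|\le CN^k} c_2(l)^2 \lesssim_\epsilon N^\epsilon \sum_{|l|\le CN^k} c_2(l),
\]
and the last sum counts solutions of $x_1+x_2=y_1+y_2$ in $\set$, which is $O(A^3)$. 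Combining, $\|Fa\|_{L^8(\T^2)}^8 \lesssim_\epsilon N^\epsilon A^4$, hence $\|\extend a\|_8^8 \lesssim_\epsilon N^{1+\epsilon} A^4 = N^{1+\epsilon}\|a\|_2^8$, and Lemma~\ref{lemma:basic_Lorentz} promotes this to general $a$.

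For the supercritical range \eqref{eq:cubes>8} when $k=3$: \eqref{eq:cubes8} gives $\|\extend a\|_{L^8(\T)} \lesssim_\epsilon N^{1/8+\epsilon}\|a\|_2$, which matches $N^{1/2-3/p}$ at $p=8$ up to $N^\epsilon$; one then removes the $\epsilon$ and extends to $p>8$ exactly as in the last paragraph of the proof of Theorem~\ref{theorem:KdV10}, invoking the $\epsilon$-removal lemmas \cite[Theorem~1.4 and Lemma~3.1]{HH:k-paraboloids} with $d=1$, the curve $n\mapsto\phi(n)$ of degree $k=3$, critical exponent $p=8$, and a suitable $\zeta$. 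The trivial $L^\infty$ bound $\|\extend a\|_\infty \le \|a\|_1 \le (2N+1)^{1/2}\|a\|_2$ interpolates to give $1+N^{1/2-3/p}$ across the full range stated in Conjecture~\ref{conjecture:1poly} once \eqref{eq:cubes8} is in hand for $p\le 8$.

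**The main obstacle** I anticipate is verifying that the factorization $(x_1-y_1)(x_2-y_1)\psi(x_1,y_1,x_2)$ of the second-difference polynomial behaves as cleanly for a general degree-$k$ polynomial $\phi$ as it does for a monomial: one must check that $\psi$ has integer coefficients (clear, since $x_1-y_1$ and $x_2-y_1$ are monic in their leading variables) and, more delicately, that after fixing the divisor triple $(e_1,e_2,e_3)$ the resulting polynomial in $y_1$ is genuinely non-constant. The identity $-e_1e_2\,\psi(y_1+e_1,y_1,y_1+e_2) = \phi(y_1+e_1+e_2)-\phi(y_1+e_1)-\phi(y_1+e_2)+\phi(y_1)$ shows the relevant polynomial is (a scalar multiple of) the mixed second difference of $\phi$ in the direction $(e_1,e_2)$, whose degree in $y_1$ is $k-2 \ge 1$; since its leading coefficient is $k(k-1)e_1e_2 \cdot(\text{lead}\,\phi)\ne 0$, non-constancy holds for every admissible pair $(e_1,e_2)$ with $e_1,e_2\ne 0$, which is exactly the case at hand. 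The only remaining care is bookkeeping the implied constant's dependence on the coefficients of $\phi$ and on $k$, which is harmless as these are fixed.
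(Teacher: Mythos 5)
The statement you were asked to address is Conjecture~\ref{conjecture:1poly}, which the paper explicitly leaves open (``We expect the following sharp bound to hold in general''); your proposal instead reconstructs the proof of Theorem~\ref{theorem:cubes8}, and the two are very far apart. The conjecture asserts $\| \extend a\|_{L^p}\lesssim (1+N^{\frac{1}{2}-\frac{k}{p}})\|a\|_{\ell^2}$ for \emph{every} $p$ and every degree $k\ge 3$. At the critical exponent $p=2k$ this reads $\|\extend a\|_{L^{2k}(\T)}\lesssim \|a\|_{\ell^2}$ with no power of $N$ at all; for $k=3$ that is an $L^6$ restriction estimate for the single cubic curve which is not known even with an $N^\epsilon$ loss, and interpolating your $L^4$ bound $N^\epsilon$ with your $L^8$ bound $N^{1/8+\epsilon}$ yields only $N^{1/12+\epsilon}$ at $L^6$. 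For $k\ge 4$ the conjecture already at $p=8$ demands the exponent $\frac{1}{2}-\frac{k}{8}\le 0$, whereas \eqref{eq:cubes8} gives only $N^{1/8+\epsilon}$ (the paper notes, via Marmon, that \eqref{eq:cubes8} is not sharp for large $k$). Your closing claim that interpolation with the trivial $L^\infty$ bound gives $1+N^{\frac12-\frac{k}{p}}$ ``across the full range'' is therefore false: it recovers the conjectured exponent only for $p\ge 8$ and $k=3$ (which is exactly \eqref{eq:cubes>8}, and even there one needs the $\epsilon$-removal machinery, not bare interpolation, to delete the $N^\epsilon$). In short, what you prove is Theorem~\ref{theorem:cubes8}; the conjecture remains untouched for $p\le 8$ when $k=3$ and for essentially all $p$ when $k\ge 4$.

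Within the part you do prove, two steps need repair. First, in the $L^4$ argument, fixing $x_1,x_2,y_1$ and solving for $y_2$ gives $O(A^3)$ solutions, not the required $O(N^\epsilon A^2)$; the correct route (which the paper takes and your parenthetical gestures at) is to fix $x_1,y_1$ with $\phi(x_1)-\phi(y_1)=l\ne 0$ and count the pairs $(x_2,y_2)$ via the factorization $(x_2-y_2)\psi(x_2,y_2)=l$, Lemma~\ref{lemma:sde}, and the divisor bound $\tau_2(|l|)\lesssim_\epsilon N^\epsilon$. Second, your inequality $\|Fa\|_{L^8(\T^2)}^8\le \sum_{l}c_2(l)^2$ goes the wrong way: writing $r(l,h)$ for the number of quadruples with $\phi(x_1)+\phi(x_2)-\phi(y_1)-\phi(y_2)=l$ and $x_1+x_2-y_1-y_2=h$, one has $\|Fa\|_{L^8(\T^2)}^8=\sum_{l,h}r(l,h)^2\ge \sum_l r(l,0)^2$, so the terms with $h\ne 0$ are simply dropped rather than bounded. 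The paper sidesteps this by imposing the linear constraint on only one pair of variables, estimating $\|\extend a\|_8^8\le (8N+1)\int_{\T^2}|Fa|^4|\extend a|^4$, which equals $(8N+1)\sum_l c_2(l)c_2'(l)$ with $c_2'$ carrying no linear side condition; then $\sum_l c_2'(l)\le A^4$, $c_2'(0)=\|\extend a\|_4^4\lesssim_\epsilon N^\epsilon A^2$, $c_2(0)=O(A^2)$ and $\max_{l\ne 0}c_2(l)\lesssim_\epsilon N^\epsilon$ combine to give $N^{1+\epsilon}A^4$. Your identification of the non-constancy of the mixed second difference of $\phi$ (degree $k-2\ge 1$ in $y_1$, leading coefficient a nonzero multiple of $e_1e_2$) is correct and is the same verification the paper performs.
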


\begin{proof}[Proof of Theorem~\ref{theorem:cubes8}]
We begin with a proof of the fourth moment estimate \eqref{eq:cubes4}. Applying 
Lemma~\ref{lemma:basic_Lorentz}, we reduce to proving \eqref{eq:cubes4} for 
sequences given by the characteristic function of some subset of the integers. As such, fix 
$[-N,N]$ and our subset $\set \subset \Z \cap [-N,N]$. Let $a = \1_{\set}$. The fourth 
moment counts the number of solutions to the equation 
\begin{align*}
\phi(x_1)-\phi(y_1) = \phi(x_2)-\phi(y_2), 
\end{align*}
with $\vector{x},\vector{y} \in \set^2$. 
There exists a polynomial $\psi(x,y)$ with integer coefficients such that 
\[
\phi(x)-\phi(y) = (x-y)\psi(x,y).
\]
On writing $x=y+e$, one sees that 
\[
\phi(x)-\phi(y) = \phi(y+e)-\phi(y)
\]
is the first order difference polynomial associated with $\phi$. Since the degree of $\phi$ 
is at least 2, one has that $\psi(y+e,y)$ is not constant as a polynomial in $y$.\par 

We distinguish between two cases. The first case is when $\phi(x_1)-\phi(y_1) = 0$. In 
this case we have two further cases to consider: either $x_1=y_1$ or $\psi(x_1,y_1)=0$. 
By Lemma~\ref{lemma:sde}, there are at most $O(A)$ putative solutions of 
$\psi(x_1,y_1)=0$ with $x_1,y_1 \in \set$, and the same is self-evidently the case when 
$x_1=y_1$. It follows that there are at most $O(A)$ solutions to the equation 
$\phi(x_1)-\phi(y_1) = 0$. By symmetry, there are also at most $O(A)$ solutions to the 
equation $\phi(x_2)-\phi(y_2) = 0$. Hence these solutions contribute at most $O(A^2)$ 
solutions to the fourth moment. 

The second case is when $\phi(x_1)-\phi(y_1) \not= 0$. There are at most $A^2$ choices 
for $x_1,y_1$ in the set $\set$ with this property. Fixing any one such choice of 
$x_1,y_1$, we may assume that $\phi(x_1)-\phi(y_1)=l$ where $1 \leq |l| \leq CN^k$ for 
an appropriate constant $C$ depending on the coefficients of $\phi$. There are at most 
$4\tau_2(|l|)$ possible choices for non-zero integers $e_1,e_2$ with $e_1e_2=l$, 
\[
x_2-y_2=e_1 
\quad \text{and} \quad 
\psi(x_2,y_2)=e_2. 
\]
For any fixed choice of $e_1$ and $e_2$, one has $\psi(y_2+e_1,y_2)=e_2$. Since this 
polynomial equation is non-constant in $y_2$, there are at most $O(1)$ possible solutions 
for $y_2$. Consequently, there are at most $O(1)$ possible solutions for $x_2$. Thus, the 
contribution of the solutions of this second type to the fourth moment is 
\[
O \left( A^2\max_{1 \leq |l| \leq CN^k} \tau_2(|l|) \right) 
\lesssim_\epsilon N^\epsilon A^2.
\]
This completes the proof of the fourth moment estimate. 

We proceed now to examine the $8$-th moment. By applying 
Lemma~\ref{lemma:basic_Lorentz}, it suffices to prove \eqref{eq:cubes8} for sequences 
given by characteristic functions of subsets of the integers. With this observation in mind, 
we again fix $[-N,N]$ and our subset $\set \subset \Z \cap [-N,N]$. Also, let 
$a=\1_{\set}$. The eighth moment $\|Ea\|_8^{8}$ counts the number of solutions to the 
equation 
\begin{align*}
\sum_{i=1}^2 (\phi(x_i)-\phi(y_i)) = \sum_{i=3}^4 (\phi(x_i)-\phi(y_i)), 
\end{align*}
with each $\vector{x},\vector{y} \in \set^4$. We foliate our set of solutions over the 
solutions to the equation $h=x_1-y_2+x_2-y_2$ as $h$ ranges in $[-4N,4N]$. Writing this 
Fourier analytically, we thus deduce that
\[
\int_{\T} |\extend{a}({\alpha})|^{8} \dd {\alpha}= \sum_{|h|\le 4N} \int_{\T} \int_{\T} 
|Fa(\alpha,\beta)|^{4} |Ea(\alpha)|^{4} e(-\beta h) \dd \beta \dd \alpha.
\]
Taking absolute values, we may impose the restriction that $h=0$ and obtain the bound
\[
\| \extend a \|_8^{8}\leq (8N+1) \int_{\T} \int_{\T} |Fa(\alpha,\beta)|^{4} 
|Ea(\alpha)|^{4} \dd \beta \dd \alpha.
\]
The mean value on the right hand side here counts the number of solutions to the system 
of equations 
\begin{equation}
\begin{aligned}\label{kw30}
\sum_{i=1}^2 (\phi(x_i)-\phi(y_i)) &= \sum_{i=3}^4 (\phi(x_i)-\phi(y_i))\\
x_1-y_1 + x_2-y_2 &= 0,
\end{aligned}
\end{equation}
with $\vector{x},\vector{y}\in \set^4$.\par

Recall from Section~\ref{section:2polys} that when $l\in \Z$, we write 
$c_2(l)$ for the number of solutions of the simultaneous equations
\[
\phi(x_1)+\phi(x_2)-\phi(y_1)-\phi(y_2)=l\quad \text{and}\quad x_1-y_1+x_2=y_2,
\]
with $\vector{x},\vector{y}\in \mathcal{A}^2$. 
Also, when $l \in \Z$, write $c_2'(l)$ for the number of solutions of the equation
\[
\phi(x_1)+\phi(x_2)-\phi(y_1)-\phi(y_2)=l,
\]
with $\vector{x},\vector{y}\in \mathcal{A}^2$. By foliating over common values in the 
equation involving $\phi$ in \eqref{kw30}, we find that a bound analogous to \eqref{kw2} 
holds in our present situation. That is, 
\begin{equation*}
\| \extend a\|^{8}_{8} \leq (8N+1) \sum_{|l|\le CN^k} c_2(l) c_2'(l).
\end{equation*}
We have the trivial bound $\sum_{l \in \Z} c_2'(l) \leq A^4$ so that 
\[
\| \extend a\|^{8}_{8} \leq (8N+1) \Big( c_2(0)c_2'(0) + A^4 
\max_{1 \leq |l| \leq CN^k} c_2(l) \Big).
\]
Observe that, in view of the fourth moment estimate already derived, one has
\[
c'_2(0)=\|Ea\|_4^4\lesssim_\epsilon N^\epsilon A^2.
\]
Thus, on recalling also \eqref{c20} and \eqref{c2}, we deduce that 
\[
\| \extend a\|^{8}_{8} \lesssim_\epsilon N\big( N^\epsilon A^2 \cdot A^2 + 
A^4 N^\epsilon\big)\lesssim_\epsilon N^{1+\epsilon}A^4.
\]
From here, as we have already explained, the proof of the eighth moment estimate 
follows by appealing to Lemma \ref{lemma:basic_Lorentz}. 

Finally, by applying \cite[Theorem~4.1 and Lemma~3.1]{HH:k-paraboloids}, the estimate 
\eqref{eq:cubes>8} follows from \eqref{eq:cubes8} when is $\phi(n)=n^3$. The keen 
reader may verify that one may adapt the arguments of 
\cite[Section~4]{HH:k-paraboloids} to deduce \eqref{eq:cubes>8} for an arbitrary cubic 
polynomial having integer coefficients. To be precise, in the statement of 
\cite[Lemma~3.1]{HH:k-paraboloids}, one takes $C=0$,  $p=8$, $q>8$ and 
$\zeta = 2^{-3}$, and in the statement of \cite[Theorem~4.1]{HH:k-paraboloids}, one 
takes $\tau=1/4$. 
\end{proof}

% 
% How to make a short bibliography
% 
% -----
% \newpage

% 
% -- Longer bibliography --
% \newpage
%\bibliographystyle{plain} %normal bibliography
%\bibliographystyle{plainnat}
%\bibliography{references_??}
%
\end{document}